\documentclass[11pt]{article}

\usepackage[hmargin=2cm,vmargin=1cm,includefoot,includehead]{geometry}

\usepackage{mathtools,amssymb,amsthm,bm,empheq}
\usepackage[all]{xy}
\usepackage{eqparbox}

\usepackage[backend=biber,style=alphabetic,sorting=nyt]{biblatex}
\renewbibmacro{in:}{}

\usepackage[colorlinks = true, pdfstartview = FitV, linkcolor = blue, citecolor = blue,
            urlcolor = blue]{hyperref}

\newcommand{\eqmath}[3][l]{%
  \eqmakebox[#2][#1]{$\displaystyle#3$}%
}

\newcommand{\pp}[2]{(\!( #1 , #2 )\!)}
\newcommand{\bb}[2]{\{\!\{ #1 , #2 \}\!\}}

\DeclareRobustCommand{\Chi}{{\mathpalette\irchi\relax}}
\newcommand*{\irchi}[2]{\raisebox{\depth}{$#1\chi$}}

\newcommand*{\andd}{\;\&\;}
\newcommand*{\rimplies}{\implies}
\newcommand*{\comma}{\:,\:}
\newcommand*{\define}[1]{\textbf{#1}}
\renewcommand*{\phi}{\varphi}

\newcommand*{\calL}{\mathcal{L}}
\newcommand*{\calP}{\mathcal{P}}
\newcommand*{\VR}{\mathrm{VR}}
\newcommand*{\vrMV}{\mathrm{vrMV}}
\newcommand*{\fmMV}{\mathrm{fmMV}}
\newcommand*{\MV}{\mathrm{MV}}
\newcommand*{\MF}{\mathrm{MF}}
\newcommand*{\FM}{\mathrm{FM}}

\DeclareMathOperator{\DV}{DV}
\DeclareMathOperator{\OC}{OC}
\DeclareMathOperator{\FV}{FV}

\newcommand*{\Schemes}{\mathrm{Schemes}}

\newcommand*{\Schemeshypfree}{{\Schemes_0}}
\newcommand*{\FMwithhyp}{{\FM_{\mathrm{with}\text{-}\mathrm{hyp}}}}

\DeclareMathOperator{\Inst}{Inst}
\DeclareMathOperator{\InstfmMVfree}{{\Inst_{\fmMV\text{-}\mathrm{free}}}}

\DeclareMathOperator{\val}{val}

\DeclareMathOperator{\Term}{Term}
\DeclareMathOperator{\Nat}{Nat}

\theoremstyle{plain}
\newtheorem{theorem}{Theorem}[section]
\newtheorem{proposition}[theorem]{Proposition}

\newtheorem{lemma}[theorem]{Lemma}

\theoremstyle{definition}

\theoremstyle{remark}
\newtheorem{remark}[theorem]{Remark}

\begin{filecontents*}[overwrite]{supertruth-v3.bib}

@book{modal,
  author = {Patrick Blackburn and Maarten de Rijke and Yde Venema},
  title = {Modal logic},
  publisher = {Cambridge University Press},
  year = {2001},
}

}

@inproceedings{carneiro,
  author       = {Mario Carneiro},
  editor       = {Andrea Kohlhase and
                  Paul Libbrecht and
                  Bruce R. Miller and
                  Adam Naumowicz and
                  Walther Neuper and
                  Pedro Quaresma and
                  Frank Wm. Tompa and
                  Martin Suda},
  title        = {Models for Metamath},
  series       = {{CEUR} Workshop Proceedings},
  volume       = {1785},
  pages        = {187--203},
  publisher    = {CEUR-WS.org},
  year         = {2016},
  url          = {https://ceur-ws.org/Vol-1785/W1.pdf},
  timestamp    = {Fri, 10 Mar 2023 16:22:33 +0100},
  biburl       = {https://dblp.org/rec/conf/cikm/Carneiro16a.bib},
  bibsource    = {dblp computer science bibliography, https://dblp.org}
}

@article{jubin,
  author       = {Beno\^{i}t Jubin},
  title        = {On the independence of Robinson's set of axioms for propositional calculus},
  year         = 2021,
  eprint       = {2109.14745},
  eprinttype   = {arxiv},
}

@article{km,
  author = {Donald Kalish and Richard M. Montague},
  title = {On Tarski's formalization of predicate logic with identity},
  journal = {Archiv für Mathematische Logik und Grundlagenforschung},
  volume = {7},
  year = {1965},
  pages = {81--101},
}

@article{megill,
  author = {Norman D. Megill},
  title = {A Finitely Axiomatized Formalization of Predicate Calculus with Equality},
  journal = {Notre Dame Journal of Formal Logic},
  number = {25},
  pages = {435--453},
  year = {1995},
  url = {us.metamath.org/downloads/finiteaxiom.pdf},
}

@book{metamath,
  author = {Norman D. Megill and David A. Wheeler},
  title = {Metamath, A Computer Language for Mathematical Proofs},
  publisher = {Lulu Press},
  year = {2019},
  url = {us.metamath.org/downloads/metamath.pdf},
}

@article{monk,
  author = {J. Donald Monk},
  title = {Substitutionless predicate logic with identity},
  journal = {Archiv für Mathematische Logik und Grundlagenforschung},
  number = {7},
  year = {1965},
  pages = {102--121},
}

@article{monk2,
  author = {J. Donald Monk},
  title = {Provability with finitely many variables},
  journal = {Proceedings of the American Mathematical Society},
  volume = {27},
  number = {2},
  year = {1971},
  pages = {353--358},
}

@article{robinson,
  author = {T. Thacher Robinson},
  title = {Independence of two nice sets of axioms for the propositional calculus},
  journal = {The Journal of Symbolic Logic},
  volume = {33},
  number = {2},
  month = {June},
  year = {1968},
  pages = {265--270},
}

@article{tarski,
  author = {Alfred Tarski},
  title = {A simplified formalization of predicate logic with identity},
  journal = {Archiv für Mathematische Logik und Grundlagenforschung},
  number = {7},
  year = {1965},
  pages = {61--79},
}

\end{filecontents*}

\addbibresource{supertruth-v3.bib}

\title{Independence questions in a finite\\axiom-schematization of first-order logic}
\author{Beno\^it Jubin}
\date{16 February 2022}

\begin{document}

\maketitle

\begin{center}
\textit{Dedicated to the memory of Norman Megill}
\end{center}

\begin{abstract}
We review some independence results in a finite axiom-schematization of classical first-order
logic introduced by Norman Megill.  We also prove that a certain axiom scheme of this system is
independent although all of its instances are provable from the other axiom schemes.
\end{abstract}

\setcounter{tocdepth}{2}
\tableofcontents

\section*{Introduction}
\addcontentsline{toc}{section}{Introduction}

Many mathematical theories which are not finitely axiomatizable can nevertheless be axiomatized
by a finite number of axiom schemes.
Examples are classical first-order logic and Zermelo--Fraenkel set theory.
If one wants to consider these theories as genuinely finitely axiomatized, one has not only to
exhibit a finite number of axiom schemes, but also to carry out the proofs at the scheme level
rather than at the object level, as is traditionally done.
This gives rise to interesting questions on the relationship between proofs at these two levels,
and in particular the relationship between ``scheme-independence'' and
``object-independence\rlap{.}''

We present such a finite axiom-schematization of classical first-order logic mainly due to
Megill~\cite{megill} building on earlier work by Tarski~\cite{tarski}, Kalish--Montague~\cite{km},
and Monk~\cite{monk}.
It has the advantage of requiring only very simple metalogic, in that it does not use the notions
of bound and free variables, but only the notions of a variable occurring in a formula and of two
variables being distinct.
It has no notion of proper substitution, but only plain substitution.

In this article, ``first-order logic'' means classical classical\footnote{
One ``classical'' is for ``classical propositional calculus'', as opposed to intuitionistic or
minimal propositional calculus, and the other ``classical'' is in opposition to free logic.}
one-sorted first-order logic with equality and no terms.
Before formally defining the required notions, in particular that of scheme, in
Section~\ref{sec:formal},\footnote{
Defined terms are written in boldface.}
we give the axiom schemes of this system, indicating simply for now that ``$\DV(x, \phi)$''
(resp.\ ``$\DV(x, y)$'') means that in the instances of the corresponding scheme, the variable
substituted for~$x$ should not occur in the formula substituted for~$\phi$ (resp.\ the variables
substituted for~$x$ and for~$y$ should be distinct).
As explained below, the symbols ``$\andd$'' and ``$\rimplies$'' are not connectives of the logic
and are to be read informally as ``and'' and ``implies''.
Also, ``\texttt{minimp}'' stands for ``minimal implicational calculus'', as explained below
together with the meaning of the other labels.
We choose a system with more numerous and weaker axiom schemes, which permits the study of
several subsystems axiomatizing various well-studied logics.

\begin{empheq}[left={\makebox[4em][r]{\texttt{propcalc}}\empheqlbrace\qquad}]{align}
&\eqmath{A}{\phi \andd \phi \to \psi \rimplies \psi} \tag{\texttt{mp}}\\
&\eqmath{A}{\phi \to ((\psi\to\chi) \to (((\theta\to\psi) \to (\chi\to\tau)) \to (\psi\to\tau)))}
 \tag{\texttt{minimp}}\\
&\eqmath{A}{((\phi\to\psi) \to \phi) \to \phi} \tag{\texttt{peirce}}\\
&\eqmath{A}{(\phi\to\neg\psi) \to (\psi\to\neg\phi)} \tag{\texttt{contrap}}\\
&\eqmath{A}{\neg\phi \to (\phi\to\psi)} \tag{\texttt{notelim}}
\end{empheq}
\vspace{-0.3em}
\begin{empheq}[left={\makebox[4em][r]{\texttt{modal bloc}}\empheqlbrace\qquad}]{align}
&\eqmath{A}{\phi \rimplies \forall x \phi} \tag{\texttt{gen}}\\
&\eqmath{A}{\forall x (\phi \to \psi) \to (\forall x \phi \to \forall x \psi)} \tag{\texttt{ALLdistr}}\\
&\eqmath{A}{\forall x \phi \to \phi} \tag{\texttt{spec}}\\
&\eqmath{A}{\neg \forall x \phi \to \forall x \neg \forall x \phi} \tag{\texttt{modal5}}
\end{empheq}
\vspace{-0.3em}
\begin{empheq}[left={\makebox[4em][r]{}}\phantom{\empheqlbrace}\qquad]{align}
&\eqmath{A}{\phi \to \forall x \phi \comma \DV(x, \phi)} \tag{\texttt{vacGen}}\\
&\eqmath{A}{\forall x \forall y \phi \to \forall y \forall x \phi} \tag{\texttt{ALLcomm}}
\end{empheq}
\vspace{-0.3em}
\begin{empheq}[left={\makebox[4em][r]{\texttt{equality}}\empheqlbrace\qquad}]{align}
&\eqmath{A}{x \equiv x} \tag{\texttt{EQrefl}}\\
&\eqmath{A}{x \equiv y \to y \equiv x} \tag{\texttt{EQsymm}}\\
&\eqmath{A}{x \equiv y \to (y \equiv z \to x \equiv z)} \tag{\texttt{EQtrans}}
\end{empheq}
\vspace{-0.3em}
\begin{empheq}[left={\makebox[4em][r]{}}\phantom{\empheqlbrace}\qquad]{align}
&\eqmath{A}{x \equiv x \to \neg \forall y \neg y \equiv x \comma \DV(x, y)} \;\;\tag{\texttt{denot}}\\
&\eqmath{A}{\forall x (x \equiv y \to (\phi \to \forall x (x \equiv y \to \phi))) \comma
 \DV(x, y) \comma \DV(y, \phi)} \tag{\texttt{subst}}\\
&\eqmath{A}{\forall x x \equiv y \to (\forall x \phi \to \forall y \phi)} \tag{\texttt{ALLeq}}\\
&\eqmath{A}{\neg \forall x x \equiv y \to (\neg \forall x x \equiv z \to
 (y \equiv z \to \forall x y \equiv z))} \tag{\texttt{genEq}}
\end{empheq}

We call this system the Tarski--Monk--Megill system\footnote{
The work of Kalish--Montague consisted in proving that the scheme of specialization \texttt{spec}
is object-provable in a related system, and in proving some independence results, but we think
that this contributed slightly less to the final form of the system~\texttt{TMM} than the works
of the three cited authors.}
and denote it by~\texttt{TMM}.
It has some variants, that we also informally call the \texttt{TMM} system.
We present some of these variants in Appendix~\ref{app:variants}, together with comments on some of
these axiom schemes.
Many variants have fewer axiom schemes, which can be an advantage for some applications (for
instance, finding models), but makes them less modular.
Here, we have chosen, on the contrary, to have more (conjecturally independent) axiom schemes,
allowing for a piecemeal presentation, in which several subsets of axiom schemes axiomatize
well-known logics (see Figure~\ref{fig:subsystems} of Appendix~\ref{app:variants}).

For first-order theories on a given language, one adds to the above schemes $n$ ``predicate axiom
schemes'' for each $n$-ary nonlogical predicate, sometimes called the ``equality axiom schemes''
associated with that predicate.
For example, if the language contains exactly one nonlogical predicate, denoted by~$\in$, which is
binary and written in infix notation, then one adds the two predicate axiom schemes
\begin{gather}
x \equiv y \to (x \in z \to y \in z) \tag{\texttt{ax-$\in_1$}},\\
x \equiv y \to (z \in x \to z \in y) \tag{\texttt{ax-$\in_2$}}.
\end{gather}
If $\calL$ is a language and emphasis is needed, then we denote by $\texttt{TMM}_\calL$ the full
system with the appropriate predicate axiom schemes.

The labels we used for these axiom schemes abbreviate, respectively:
modus ponens,
minimal implicational calculus,
Peirce's law,
contraposition,
``not'' elimination,
rule of generalization,
``forall distributes over implication\rlap{,}''
specialization,
axiom corresponding to the modal logic axiom \textbf{5},
vacuous generalization,
``forall quantifiers commute\rlap{,}''
equality is reflexive (resp.\ symmetric, transitive),
denotation\rlap{,}\footnote{
Any variable ``denotes'' in the sense of free logic.}
substitution, ``forall quantifiers over equal variables\rlap{,}'' generalized equality.
We make use of the standard translation between modal logic and monadic first-order logic
(necessity maps to ``$\forall x$'' and possibility to ``$\exists x$'', which we use as a
shorthand for ``$\neg \forall x \neg$'').
Other common names for some of these axiom schemes are
$\texttt{ALLdistr} = \texttt{modalK} = \texttt{kripke}$ and
$\texttt{spec} = \texttt{modalT}$.
Other schemes sometimes used as axioms are:
\begin{gather}
\forall x \neg \phi \to \neg \forall x \phi, \tag{\texttt{modalD}}\\
\neg \phi \to \forall x \neg \forall x \phi, \tag{\texttt{modalB}}\\
\forall x \phi \to \forall x \forall x \phi. \tag{\texttt{modal4}}
\end{gather}
The scheme \texttt{spec} implies \texttt{modalD} over \texttt{propcalc} (defined below).

We define the following subsystems of \texttt{TMM}:
\begin{align*}
\texttt{propcalc} &\coloneqq \{ \texttt{mp}, \texttt{minimp}, \texttt{peirce},
                                \texttt{contrap}, \texttt{notelim} \},\\
\texttt{EQ}       &\coloneqq \{ \texttt{EQrefl}, \texttt{EQsymm}, \texttt{EQtrans}\},\\
\texttt{T}        &\coloneqq \texttt{propcalc} \cup \texttt{EQ} \cup
                             \{ \texttt{gen}, \texttt{ALLdistr}, \texttt{vacGen}, \texttt{denot} \},\\
\texttt{TM}       &\coloneqq \texttt{TMM} \setminus \{ \texttt{ALLeq}, \texttt{genEq} \}.
\end{align*}

The notions of completeness and independence, recalled in Section~\ref{sec:formal}, are relative
to schemes and proofs at the scheme level;
the notions of object-completeness and object-independence are relative to the instances at the
object level of these schemes, and proofs between them.

The system \texttt{T} was proved to be object-complete by Tarski~\cite{tarski} and
object-independent by Kalish--Montague~\cite{km}.
The system \texttt{TMM} was proved to be complete by Megill~\cite{megill}.
The question of independence of its axiom schemes is still open, and the main new results in this
article are steps in that direction.

Namely, we build on Kalish--Montague's and Monk's proofs of independence (and in several cases provide
new proofs)\footnote{The proof of the independence of the rule of generalization is new, and for
the axioms of propositional calculus, one truth table is new and the others are classical.} to
prove the independence of the axiom schemes of~\texttt{T} and of~\texttt{subst} and~\texttt{modal5}
in~\texttt{TMM}.\footnote{
With some restrictions detailed in the text for the axiom schemes of the equality bloc.}
Using new notions of ``supertruth\rlap{,}'' we prove the independence
of~\texttt{ALLcomm} in $\texttt{TMM} \setminus \{\texttt{spec}, \texttt{ALLeq}\}$
(Proposition~\ref{prop:ALLcomm}),
of~\texttt{ALLeq} in $\texttt{TMM} \setminus \{\texttt{spec}\}$
(Proposition~\ref{prop:ALLeq}),
and of~\texttt{spec} in~\texttt{TMM}
(Proposition~\ref{prop:spec}).
The remaining open questions are the independence of the latter three axiom schemes in~\texttt{TMM},
as well as of~\texttt{genEq}.

\begin{remark}
A fundamental insight of Tarski, which makes these systems with weaker metalogic work, is that
if~$x$ and~$y$ are disjoint (that is, the variables substituted for~$x$ and for~$y$ should be
distinct), then the formula $\forall x (x \equiv y \to \phi)$, which we denote by $[y/x]\phi$,
is equivalent to the result of the proper substitution of~$y$ for~$x$ in~$\phi$.
With this notation, the scheme \texttt{subst} can be written
$[y/x](\phi \to [y/x]\phi), \DV(x, y), \DV(y, \phi)$.
\end{remark}

\begin{remark}\label{rmk:mpe}
The finitary property of the axiom-schematization \texttt{TMM} makes it well-suited to
automatic proof verification.
In particular, one of its variants is the axiom-schematization of first-order logic used in the
main database written in the Metamath language (see~\cite[Appendix C]{metamath}), which
formalizes ZFC set theory (and many other areas of mathematics) on top of it.
We give in Appendix~\ref{app:labels} a table of correspondence of scheme labels used in this
article and in that Metamath database, called \texttt{set.mm}, and browsable online at
\url{https://us.metamath.org/mpeuni/mmset.html}.
\end{remark}

\paragraph{Plan of the article}
In Section~\ref{sec:formal}, we define the formal systems that are the object of study of this paper.
In Section~\ref{sec:complete}, we recall the completeness results of Tarski and Megill.
In Section~\ref{sec:indep}, we prove the independence results stated above.
In particular, we exhibit a natural example in first-order logic of an independent scheme all
of whose object-instances are redundant.
Appendix~\ref{app:arithm} gives an elementary example of a similar phenomenon of independence
of a scheme all of whose object-like instances are redundant.
Appendix~\ref{app:variants} makes some comments on the various axiom schemes and on some variants
of~\texttt{TMM}.
Appendix~\ref{app:gen} gives an alternate proof due to Mario Carneiro of the independence
of~\texttt{gen}.
Appendix~\ref{app:labels} contains a table of correspondence of scheme labels used here and
in the Metamath database~\href{https://us.metamath.org/mpeuni/mmset.html}{\texttt{set.mm}}.

\paragraph{Acknowledgments}
I warmly thank Mario Carneiro and Norman Megill for careful reading of earlier drafts of this
article and very useful comments.
I also thank the contributors of the Metamath database \texttt{set.mm}, in particular
Norman Megill, Mario Carneiro, and Wolf Lammen for their work related to axiomatic questions.
I also thank the anonymous referee for corrections and suggestions of improvements.

\paragraph{Norman Megill}
I discovered the notion I called ``supertruth'' in August 2020 and rapidly began discussing
it with Mario Carneiro and Norman Megill.
Over the following year, it slowly extended to the present article, which benefited from many
exchanges with Mario and Norm.
Although I have never met Norm personally, I enjoyed many insightful discussions we had over the
years, on this and related topics.
The formalized mathematics tool Metamath is his creation, and the vibrant community he gathered
around it owes him a lot.
I dedicate this article to his memory.

\section{The formal system}
\label{sec:formal}
In this section, we define the formal systems that are the object of study of this paper.
They are special cases of ``Metamath systems\rlap{,}'' whose general definition can be found
in~\cite[Appendix~C.2]{metamath} and~\cite[Subsection~2.1]{carneiro}.

\subsection{Schemes}
\label{subsec:schemes}

We denote by $\omega = 0, 1, 2, \dots$ the set of natural numbers.
We define two disjoint sets of symbols: the sets of \define{variable metavariables} and the set
of \define{formula metavariables}, defined respectively by\footnote{More precisely, one can
consider two bijective functions $x$ and $\phi$ from $\omega$ onto two disjoint sets.}
\begin{align}
\vrMV &\coloneqq \{x_i \mid i \in \omega \},\\
\fmMV &\coloneqq \{\phi_i \mid i \in \omega \},
\end{align}
and we denote by
\begin{equation}
\MV \coloneqq \vrMV \cup \fmMV
\end{equation}
the set of \define{metavariables}.
We sometimes informally write $x, y, z, \dots$ instead of $x_0, x_1, x_2, \dots$ and
$\phi, \psi, \chi, \dots$ instead of $\phi_0, \phi_1, \phi_2, \dots$. 

A \define{language} is a set of symbols (disjoint from the above sets), whose members are called
\define{nonlogical predicates}, together with a function from this set to~$\omega$ called the
arity function of the language.
We will typically use the notation $\calL = \{P_0, P_1, P_2, \dots\}$, with an often implicit
arity function $r \colon \calL \to \omega$.

A \define{metaformula} (on a language $\calL$) is a finite string of characters in
$\{ \to , \neg, \forall, \equiv \} \cup \MV \cup \calL$ conforming to the usual formation
rules\rlap{.}\footnote{
Explicitly, a metaformula is either a formula metavariable, a predicate (equality or a
non-logical predicate) followed by the number of variable metavariables corresponding to its
arity, the universal quantifier followed by a variable metavariable and a metaformula, the
negation of a metaformula, or the implication of two metaformulas.}
The \define{height} of a metaformula is defined as usual.
For the sake of readability, we will use an infix notation, and therefore parentheses, but it is
understood that the formal language uses prefix notation, which renders parentheses unnecessary.
We denote the set of metaformulas by\footnote{We use the Kleene star to denote the set of finite
strings of characters on a given alphabet.}
\begin{equation}
\MF \subseteq \bigl( \{ \to , \neg, \forall, \equiv \} \cup \MV \cup \calL \bigr)^*
\end{equation}
or $\MF_\calL$ if emphasis on the language is needed.
Note that $\fmMV \subseteq \MF$.

A \define{scheme} is a triple consisting of a finite set of metaformulas (its hypotheses), a
metaformula (its conclusion), and a (finite) set of pairs of metavariables occurring in its
hypotheses or conclusion (its disjoint variable conditions, or \define{DV~conditions}; this
terminology will become clear in Equation~\eqref{eq:legitimateSubst}).
We use $\Phi, \Psi, \Chi, \dots$ to denote metaformulas or schemes (which one will be clear
from context).
If $\Phi$ is a metaformula or a scheme, then we denote by
\begin{equation}
\OC(\Phi) \subseteq \MV
\end{equation}
the set of metavariables \define{occurring} in it (for schemes, this means: occurring in its
hypotheses or conclusion).
We also define $\OC(x_i) \coloneqq \{x_i\}$ for $i \in \omega$.
If $\Phi$ is a scheme, then we denote by $\DV(\Phi)$ its set of DV~conditions.
With this notation, 
\begin{equation}
\DV(\Phi) \subseteq \calP_2(\OC(\Phi))
\end{equation}
for any scheme $\Phi$, where $\calP_2$ denotes the set of subsets of cardinality~2 of the given set.

A scheme will typically be written as $\Phi = (\{\Phi_1, \dots, \Phi_n\}, \Phi_0, \DV(\Phi))$ or
informally $\Phi_1 \andd \dots \andd \Phi_n \rimplies \Phi_0 \comma \DV(\Phi)$ as in the introduction.
When there is no ambiguity, especially when there are no hypotheses, we will generally use the
same notation for a scheme and its conclusion.
We will sometimes use informal self-explanatory notation.
For instance, the scheme $(\varnothing, \neg\forall x_0 \neg x_0 \equiv x_1, \{\{x_0, x_1\}\})$
may be abbreviated as $\exists x_0 x_0 \equiv x_1 \comma \DV(x_0, x_1)$.
We will often write $(\Phi_I, \Phi_0, D)$ instead of $(\Phi_I, \Phi_0, D \cap \calP_2(\OC(\Phi)))$.
Also, a scheme with no DV~conditions may be written simply as $(\Phi_I, \Phi_0)$
and a scheme with no hypotheses as $(\Phi_0, \DV(\Phi))$.

A (type-preserving) \define{substitution} is a function
\begin{equation}
\sigma \colon \MV \to \vrMV \cup \MF
\end{equation}
such that $\sigma(\vrMV) \subseteq \vrMV$ and $\sigma(\fmMV) \subseteq \MF$, and
$\{ m \in \MV \mid \sigma(m) \neq m \}$ is finite.
If a substitution is defined as a partial function, then it is assumed to be the identity where
not defined.
The action of a substitution on a metaformula is defined in the natural way as a simultaneous
substitution.
The resulting metaformula is called an ``instance'' of the original metaformula.
We denote the result of the action of the substitution~$\sigma$ on the metaformula~$\Phi$
by~$\Phi^\sigma$.
For example, the metaformula
$(\forall x_0 \phi_0)^{x_0 \leftarrow x_1, \phi_0 \leftarrow x_0\equiv x_2}$ is
$\forall x_1 x_0\equiv x_2$.

The action of a substitution on a scheme is defined as follows: the substitution acts on its
hypotheses and conclusion as above, and also on its set of DV~conditions in the natural way:
if $D \subseteq \calP_2(\MV)$ and $\sigma$ is a substitution, then
\begin{equation}
\label{eq:propagate}
D^\sigma \coloneqq \bigl\{
\{m, n\} \in \calP_2(\MV) \mid
\exists \{m', n'\} \in D \; (( m \in \OC(\sigma(m')) \wedge n \in \OC(\sigma(n')))
\bigr\}.
\end{equation}
We use the same notation as for metaformulas.
A substitution~$\sigma$ is \define{legitimate} on a scheme~$\Phi$ if it does not violate
its DV~conditions, that is, if
\begin{equation}
\label{eq:legitimateSubst}
\{m, n\} \in \DV(\Phi) \text{ implies }\OC(\sigma(m)) \cap \OC(\sigma(n)) = \varnothing.
\end{equation}
If a substitution is applied to a scheme, it will be implicitly assumed that it is legitimate
on that scheme.
An \define{instance} of a scheme is the result of a legitimate substitution possibly followed by
the addition of any number of DV~conditions.
Formally, $\Psi \in \Inst(\Phi)$ if and only if there exists a substitution~$\sigma$ legitimate
on~$\Phi$ such that $\Psi_i = {\Phi_i}^\sigma$ for $0 \leq i \leq n$ and
$\DV(\Phi)^\sigma \subseteq \DV(\Psi)$.
In this case, the substitution $\sigma$ is uniquely determined on $\OC(\Phi)$, and if we mention
``the'' substitution yielding $\Psi$, it will be the one equal to the identity outside
of~$\OC(\Phi)$.

Note the following important idempotence or transitivity property: an instance of an instance of a
metaformula or scheme is an instance of that metaformula or scheme (and a metaformula or scheme is
an instance of itself, so instantiation yields a preorder on the set of metaformulas and a preorder
on the set of schemes).

\subsection{Proofs}

Proofs are defined similarly as in classical logic.
The precise definition for general Metamath systems can be found in the cited
references\rlap{.}\footnote{
Actually, only the notion of provability is explicitly defined, though a definition of a proof
easily follows.}
A \define{proof} of a scheme~$\Phi$ from a set of schemes~$S$ is a couple consisting of~$\Phi$
and a finite sequence~$P$ of metaformulas (called the ``lines'' of the proof) satisfying the
following two conditions:
\begin{enumerate}
\item
if a line $P_i$ is not a hypothesis of $\Phi$, then there exist $i_1, \dots, i_n < i$ such that
$(\{P_{i_1}, \dots, P_{i_n}\}, P_i, \DV(P))$ is an instance of a scheme in $S$, where
\begin{equation}
\DV(P) \coloneqq \DV(\Phi) \cup \bigl\{ \{m, n\} \mid m \in \OC(P) \setminus \OC(\Phi)
\text{ and } n \in \OC(P) \bigr\}
\end{equation}
is the set of DV~conditions of $P$;
\item
the final line of the proof is the conclusion of $\Phi$.
\end{enumerate}
Note that $\DV(P) \cap \calP_2(\OC(\Phi)) = \DV(\Phi)$.
A scheme is \define{provable} from a set of schemes if there exists a proof of that scheme from
that set of schemes.
The set of schemes provable from the set of schemes $S$ is denoted by $\overline{S}$.
We also write $S \vdash \Phi$ for $\Phi \in \overline{S}$.
A metavariable in $\OC(P) \setminus \OC(\Phi)$ is called a \define{dummy} metavariable of~$P$,
and the above condition in the definition of $\DV(P)$ means that all dummy metavariables are
disjoint from all other metavariables.

We now define the action of a substitution on a proof.
Let $\sigma$ be a substitution and $P$ be (the second component of) a proof of the scheme $\Phi$.
First, rename every dummy metavariable of~$P$ to avoid clashes\rlap{.}\footnote{For instance,
rename each dummy metavariable $m_i$ of~$P$ to the metavariable $m_{i+N+1}$, where $N$ is the
largest $j \in \omega$ such that $\sigma(m_j) \neq m_j$ or $m_j \in \OC(\sigma(m_k))$ for
some~$k$ with $\sigma(m_k) \neq m_k$.}
Then, apply the substitution to each line of $P$.
The resulting sequence is denoted by $P^\sigma$ (it can be defined unambiguously as indicated in
the previous footnote).
Similarly, the action of a substitution on a set of schemes is defined to be the action on each
of its elements.

\begin{proposition}\label{prop:proof-instance}
A scheme proves all its instances.
If~$P$ is a proof of the scheme~$\Phi$ from the set of schemes~$S$ and $\sigma$ is a substitution,
then $P^\sigma$ is a proof of~$\Phi^\sigma$ from~$S^\sigma$ and from~$S$.
If a scheme is provable from a given set of schemes, then so are all its instances.
If the scheme~$\Phi$ is provable from the set of schemes~$S$ and every scheme in~$S$ is provable
from the set of schemes~$T$, then $\Phi$ is provable from~$T$.
\end{proposition}

\begin{proof}
The first claim is obvious: there exists a proof whose lines are precisely the hypotheses and
conclusion of that instance of the scheme.
The second claim is straightforward, and that a proof from~$S^\sigma$ is also a proof from~$S$
follows from transitivity of instantiation.
The third claim is a consequence of the first two.

As for the fourth claim, let~$P$ be a proof of~$\Phi$ from~$S$.
We can suppose, up to duplicating lines of $P$ starting from the last line and going backwards,
that a given line is used at most once as a hypothesis of a later line.
This corresponds to making the proof seen as a rooted directed acyclic graph (with the last line
as the root) into a directed tree.
We apply the following procedure starting from the last line of~$P$ and going backwards:
If $P_i$ is a hypothesis of $\Phi$, we do not modify it.
Else, there exist $i_1, \dots, i_n < i$ such that $(\{P_{i_1}, \dots, P_{i_n}\}, P_i, \DV(P))$
is an instance of a scheme in~$S$.
By the second claim, that scheme has a proof from~$T$, say~$Q$.
We can suppose that $Q$ also has the form of a directed tree, and we ``graft'' it in place of
$(\{P_{i_1}, \dots, P_{i_n}\}, P_i)$, that is, we rename dummy metavariables to avoid collisions,
we add to~$P$ the lines of~$Q$ other than $P_{i_1}, \dots, P_{i_n}, P_i$, preserving line order,
we update $\DV(P)$ with the new dummy metavariables, and we again duplicate the subtrees with
roots $P_{i_1}, \dots, P_{i_n}$ that are used more than once as hypotheses.
The added lines are the conclusions of instances of schemes of~$T$, and they are left untouched
by later stages of the procedure.
We repeat the procedure with the line which was originally~$P_{i-1}$.
The procedure terminates since the lines which are added at each step are left untouched.
The result of that procedure is a proof of~$\Phi$ from~$T$.
\end{proof}

We now give a sufficient condition for a set of schemes to be closed under provability.
This will be useful in proving independence results.

\begin{proposition}\label{prop:valid-sound}
Let $R$ be a set of hypothesis-free schemes.
Let $A \colon \Schemes \to \calP(\Schemes)$ be such that
$A \circ \Inst \subseteq A \subseteq \Inst$.
Then, the set
\begin{equation}\label{eq:validity}
S_{A, R} \coloneqq \left\{ \Phi \in \Schemes \mid
\forall \Psi \in A(\Phi)
\left(
\forall i \in [1, n] \; (\Psi_i, \DV(\Psi)) \in R
\Rightarrow
(\Psi_0, \DV(\Psi)) \in R
\right)
\right\}
\end{equation}
is closed under provability\rlap{.}\footnote{
As indicated in the previous section, we write schemes as triples
$\Psi = (\{\Psi_1, \ldots, \Psi_n\}, \Psi_0, \DV(\Psi))$ or as couples
$(\Psi_0, \DV(\Psi))$ when there are no hypotheses.}

If~$R$ is closed under instantiation, then $R = S_{\Inst,R} \cap \Schemeshypfree$.
\end{proposition}

\begin{proof}
Let $P$ be a proof of the scheme $\Phi$ from~$S_{A, R}$.
Let $\Psi \in A(\Phi)$.
Since $A \subseteq \Inst$, there exist a substitution~$\sigma$ and a set
$D \subseteq \calP_2(\OC(\Psi))$ such that
$\Psi = (\Phi^\sigma, \DV(\Phi^\sigma) \cup D)$.

We suppose that $(\Psi_i, \DV(\Psi)) \in R$ for all its hypotheses and we will prove by induction
that the hypothesis-free schemes $(P_i^\sigma, \DV(P^\sigma) \cup D)$ corresponding to each line of
$P^\sigma$, which is a proof of~$\Psi$ from schemes of~$S_{A, R}$ by
Proposition~\ref{prop:proof-instance}, are in $R$.
Since the last line corresponds to $(\Psi_0, \DV(\Psi))$, this will prove the proposition.

If the line $P_k$ is a hypothesis of $\Phi$, then $(P_k^\sigma, \DV(P^\sigma) \cup D)$ is some
$(\Psi_i, \DV(\Psi))$, which is in~$R$ by hypothesis.
Else, there exist $k_1, \dots, k_n < k$ such that $(\{P_{k_1}, \dots, P_{k_n}\}, P_k, \DV(P))$
is an instance of a scheme $\Chi \in S_{A, R}$.
Therefore, $(\{ P_{k_1}^\sigma, \dots, P_{k_n}^\sigma \}, P_k^\sigma, \DV(P^\sigma) \cup D)
\in A(\Inst(\Chi)) \subseteq A(\Chi)$.
By the induction hypothesis, its hypotheses are in~$R$, so it follows from $\Chi \in S_{A, R}$ that
its conclusion is also in~$R$.

Finally, $A \subseteq \Inst$ implies $A(\Schemeshypfree) \subseteq \calP(\Schemeshypfree)$, so 
$S_{A, R} \cap \Schemeshypfree = A^{-1}(\calP(R))$, and $\Inst^{-1}(\calP(R)) \subseteq R$ with
equality when~$R$ is closed under instantiation.
\end{proof}

Examples of maps~$A$ satisfying the hypotheses of that proposition are $\Inst$ itself, as well
as $\InstfmMVfree$, the map associating with each scheme the set of its formula-metavariable-free
instances.
In applications, $R$ will be a set of hypothesis-free schemes that we interpret as characterizing
a form of ``prevalidity'', from which we will define ``validity'' by Equation~\eqref{eq:validity}.

\subsection{From the scheme level to the object level (and back?)}

We now explain the relationship between the scheme level and the object level of usual logic. The
fundamental notion to connect these two levels is that of \define{object-instantiation}. We 
define a new set of symbols disjoint from $\MV$, the set of (individual) \define{variables},
by\footnote{
More precisely, one can consider a bijective function $v \colon \omega \to \VR$ with
$\MV \cap \VR = \varnothing$.}
\begin{equation}
\VR \coloneqq \{v_i \mid i \in \omega \}.
\end{equation}
Let $\calL_\text{obj}$ be a set, called the \define{object language}, such that
$\calL \subseteq \calL_\text{obj}$ (together with an arity function extending that of $\calL$).
The set of formulas of first-order logic is denoted by
$\FM \subseteq \bigl( \{ \to , \neg, \forall, \equiv \} \cup \VR \cup \calL_\text{obj} \bigr)^*$
and is defined as usual\rlap{.}\footnote{
There is no notion of disjoint variables or DV~condition at the object level: two variables $v_i$
and $v_j$ are simply the same if $i = j$ or different if $i\neq j$.}
Formulas will often be denoted by $\bm\phi, \bm\psi, \dots$ to differentiate them from formula
metavariables (elements of~$\fmMV$).
An object-substitution is a function $\tau \colon \MV \to \VR \cup \FM$ such that
$\tau(\vrMV) \subseteq \VR$ and $\tau(\fmMV) \subseteq \FM$.
The action of an object-substitution on a metaformula is defined in the natural way, and results in
a formula called an \define{object-instance} of the metaformula.
We use the same notation as for instantiation.
For example, the formula $(\forall x_0 \phi_0)^{x_0 \leftarrow v_1, \phi_0 \leftarrow v_0\equiv v_2}$
is $\forall v_1 v_0\equiv v_2$.

One extends this action to schemes as above, with the same notion of legitimacy (for example, if a
scheme has $\DV(x_0, \phi_0)$, then one cannot substitute in it $v_0 \equiv v_1$ for $\phi_0$
and $v_0$ for $x_0$). We call the resulting couples (consisting of a set of formulas and a formula)
\define{formulas with hypotheses}.

Similarly to the transitivity property of instantiation, an object-instance of an instance of a
scheme is an object-instance of that scheme.

Suppose that $P$ is a proof of a scheme $\Phi$ from a set of schemes $S$ and that $F$ is an
object-instance of~$\Phi$. It is immediate to construct from~$P$ a proof (at the object level) of~$F$
from object-instances of schemes in~$S$. A scheme is \define{object-provable} from a set of schemes
if all its object-instances are provable from object-instances of these schemes. Therefore, the above
shows that provability implies object-provability. Does the converse hold? Answer in the next
paragraph.

\begin{remark}
\label{rmk:language}
Object-provability depends on the language, whereas provability does not.
More precisely, let $\calL$ and $\calL'$ be two languages and let~$\Phi$ be a scheme and $S$ be a
set of schemes, both on the language $\calL \cap \calL'$.
Then, $\Phi$ is provable from $S$ in the language $\calL$ if and only if it is so in the
language~$\calL'$: in a proof on any language, replace all atomic expressions involving a nonlogical
predicate not in $\calL \cap \calL'$ by a constant, say $\top$.
The result is a proof of $\Phi$ from $S$ in $\calL \cap \calL'$.
On the other hand, it can be the case that $\Phi$ is object-provable from $S$ in a language but not
in a larger one, because object-instantiation involves the language: on a larger language, $\Phi$
has more object-instances.
This is detailed in the next paragraph where we recall Monk's result of object-independence
of~\texttt{subst} when the language has a non-nullary nonlogical predicate.
\end{remark}

The following proposition is the analogue at the object level of Proposition~\ref{prop:valid-sound},
and it has a very similar proof.

\begin{proposition}\label{prop:valid-sound-obj}
Let $R \subseteq \FM$ be a set of formulas.
Let $A \colon \Schemes \to \calP(\FMwithhyp)$ be such that
$A \circ \Inst \subseteq A \subseteq \operatorname{Obj-Inst}$.
Then, the set
\begin{equation}
T_{A, R} \coloneqq \{ \Phi \in \Schemes \mid
\forall \bm\phi \in A(\Phi) \;
(\forall i \in [1, n] \; \bm\phi_i \in R \Rightarrow \bm\phi_0 \in R) \}
\end{equation}
is closed under provability.
\end{proposition}

\paragraph{Independence}

A scheme is (\define{object}-)\define{independent from} a set of schemes if it is not (object-)provable
from them\rlap{.}\footnote{
Traditionally, independence also includes unprovability of the negation. In the cases under
consideration in this article, the negation is trivially unprovable since the considered scheme is
true in the standard model.}
Equivalently, a scheme is object-independent from a set of schemes if at least one of its
object-instances is not provable from their object-instances.
A scheme is (\define{object}-)\define{independent in} a set of schemes if it is (object-)independent
from the other schemes in that set.
A scheme is (\define{object}\nobreakdash-)\define{redundant in} a set of schemes if it is not
(object\nobreakdash-)independent
in that set, or equivalently if it is (object-)provable from the other schemes in that
set\rlap{.}\footnote{
We use ``redundant'' for ``not independent'' because ``dependent'' has another meaning.}

Since provability implies object-provability, object-independence implies independence.
The converse does not hold.
Examples can be given for Metamath systems that are more elementary than first-order logic, even if
somewhat artificial (see Appendix~\ref{app:arithm}).
In first-order logic, an example comes from Monk's proof of the object-independence of \texttt{subst}
in \texttt{TM} on any language containing a non-nullary nonlogical predicate, even though it is
object-redundant on an empty language, as proved by Tarski (\cite[Lem.~13 and~16]{tarski}).
The argument is as follows: since \texttt{subst} is object-independent on any language containing a
non-nullary nonlogical predicate (and no corresponding predicate axiom scheme), it is independent.
Therefore, if one adds the predicate axiom schemes associated with each predicate, one has
object-redundancy and independence.

Are there examples which do not require adding a nonlogical predicate and associated predicate
axiom schemes?
There are.
An example is given by our proof of the independence of~\texttt{spec} in~\texttt{TMM} in
Proposition~\ref{prop:spec}, even though it is object-provable from~\texttt{T} as proved by
Kalish--Montague.

\subsection{Emulating the object level at the scheme level}
\label{subsec:emulating}

The following remark due to Norman Megill is useful to treat some object-level problems at the
scheme level.

The mapping $\VR \to \MV, v_j \mapsto x_j$ gives rise to an injection
$i \colon \FM \hookrightarrow \MF$.
One can extend it to formulas with hypotheses as follows: $i$ acts as above on hypotheses and
conclusion, and adds all possible DV~conditions among occurring metavariables.
In particular, the image of~$i$ is the set of schemes containing no formula metavariables and with
all DV~conditions among occurring metavariables.
We call these schemes \textbf{object-like}.

Defining the action of $i$ on sets of formulas with hypotheses and on proofs in the natural way,
one sees that if $P$ is an object-level proof of the formula with hypotheses~$F$ from the set of
formulas with hypotheses~$S$, then $i(P)$ is a proof of the scheme $i(F)$ from the set of
schemes $i(S)$.
In particular, a scheme is object-provable from a set of schemes $S$ if and only if all its
object-like instances are provable from $S$.

\begin{remark}
To demonstrate the usefulness of this method, we prove that \texttt{spec} is object-provable from
\texttt{T}\rlap{.}\footnote{That fact is also a consequence of Theorem~\ref{thm:tarski}.}
We will actually prove the more general result: for all metaformula~$\Phi$ and for all variable
metavariable~$x$, one has
\begin{equation}
\label{eq:sp0}
\texttt{T} \vdash 
\Big(
\forall x \Phi \to \Phi
\comma
\big\{ \{x, m\} \mid m \in \OC(\Phi) \setminus \{x\} \big\}
\Big).
\end{equation}
This generalizes object-provability of \texttt{spec} in two ways: $\Phi$ may contain formula
metavariables, and the DV conditions are only with $x$, and not among other metavariables occurring
in~$\Phi$.
First, one has
\begin{equation}
\texttt{T} \vdash 
\Big(
x \equiv y \to (\phi \leftrightarrow \psi) \rimplies \forall x \phi \to \phi
\comma
\big\{\{x, y\}, \{x, \psi\}, \{y, \phi\}\big\}
\Big).
\end{equation}
This is proved as~\href{https://us.metamath.org/mpeuni/spw.html}{\texttt{spw}} in~\texttt{set.mm}.
The hypothesis of this scheme can be thought of as ``$\psi$ is the result of replacing every
occurrence of~$x$ in~$\phi$ by a fresh metavariable~$y$\rlap{.}''

Now, it suffices to prove the hypothesis of the above scheme for a particular choice of~$\psi$
(possibly with additional DV conditions as long as they do not exceed those of~\eqref{eq:sp0}).
We prove that for all metaformula~$\Phi$ and for all variable metavariables~$x$ and~$y$, if $y$ is
fresh (that is, if $y \notin \{ x\} \cup \OC(\Phi)$)\rlap{,}\footnote{If $x = y$, then the result
is a tautology (as for the weakened forms of \texttt{ALLcomm} and \texttt{ALLeq} below) but these
degenerate forms are not needed.} then
\begin{equation}
\texttt{T} \vdash
\Big(
x \equiv y \to (\Phi \leftrightarrow \Phi^{x \leftarrow y}),
\big\{ \{y, m\} \mid m \in \{x\} \cup \OC(\Phi)\big\} 
\Big).
\end{equation}
This is easily proved by induction on the height of~$\Phi$.
The base cases correspond to~$\Phi$ atomic, that is, of the form $\phi_i$, or $x_i \equiv x_j$, or
similarly with a nonlogical predicate.
These cases are respectively trivial and proved from~\texttt{EQ} and from the predicate axiom
schemes for the nonlogical predicates.
As for the induction steps, the propositional calculus cases (that is, $\Phi$ is an implication or
a negation) pose no difficulty.
If~$\Phi$ is of the form $\forall z \Psi$ with $x \neq z$, then the result follows easily
from~\texttt{ALLdistr}.
Finally, suppose that~$\Phi$ is of the form $\forall x \Psi$.
One has to prove
$\big( x \equiv y \to (\forall x \Psi \leftrightarrow \forall y \Psi^{x \leftarrow y}),
\big\{ \{y, m\} \mid m \in \OC(\Psi) \cup \{x\} \big\} 
\big)$ knowing by induction hypothesis that 
$\big( x \equiv y \to (\Psi \leftrightarrow \Psi^{x \leftarrow y}),
\big\{ \{y, m\} \mid m \in \OC(\Psi) \cup \{x\} \big\} 
\big)$.
This is a consequence of~\href{https://us.metamath.org/mpeuni/cbvalvw.html}{\texttt{cbvalvw}}
in~\texttt{set.mm}.

The provability result~\eqref{eq:sp0} is about the best one can hope for in~\texttt{T}, in terms of
DV~conditions, in view of the independence of $\forall x x \equiv y \to x \equiv y$ proved in the
proof of Proposition~\ref{prop:spec}.

Similarly, one can prove object-provability, and actually similar generalizations thereof as above,
corresponding to \texttt{modal5} (and similarly with \texttt{modalB} and
\texttt{modal4}\footnote{The axiom \texttt{modalD} is provable from \texttt{T} without need of
weakening.}), \texttt{ALLcomm}, \texttt{subst} and \texttt{ALLeq}:
for any metaformula~$\Phi$ and variable metavariables~$x$ and~$y$, one has, respectively,
\begin{gather}
\texttt{T} \vdash 
\Big(
\exists x \Phi \to \forall x \exists x \Phi
\comma
\big\{ \{x, m\} \mid m \in \OC(\Phi) \setminus \{x\} \big\}
\Big)\\
\texttt{T} \vdash 
\Big(
\forall x \forall y \Phi \to \forall y \forall x \Phi
\comma
\big\{ \{y, m\} \mid m \in \{x\} \cup \OC(\Phi) \setminus \{y\} \big\}
\Big)\\
\begin{multlined}
y \notin \{ x \} \cup \OC(\Phi), \texttt{T} \vdash 
\Big(
\forall x (x \equiv y \to (\Phi \to \forall x (x \equiv y \to \Phi)))
\comma\qquad\qquad\\\qquad\qquad
\big\{ \{x, m\} \mid m \in \OC(\Phi) \setminus \{x\} \big\} \cup
\big\{ \{y, m\} \mid m \in \{x\} \cup \OC(\Phi) \big\} 
\Big)
\end{multlined}\\
\texttt{T} \vdash 
\Big(
\forall x x \equiv y \to (\forall x \Phi \to \forall y \Phi)
\comma
\big\{ \{x, m\} \mid m \in \OC(\Phi) \setminus \{x\} \big\} \cup
\big\{ \{y, m\} \mid m \in \{x\} \cup \OC(\Phi) \setminus \{y\} \big\} 
\Big).
\end{gather}
\end{remark}

\section{Soundness and Completeness}
\label{sec:complete}

In order to define soundness and completeness of a system, one has to first define its
\define{semantics}.
At the object level, this is the usual semantics of first-order logic: a statement with hypotheses
is true\footnote{\label{footnote:valid}A more frequent term is ``(universally) valid\rlap{,}''
especially when ``true'' is reserved for specific models and assignments.} if it is true in every
nonempty model of first-order logic\footnote{We attribute to open formulas the truth value of their
universal closure; this is innocuous since we restrict ourselves to nonempty models.} (and in the
case of formulas with hypotheses, if it preserves truth in every such model).
At the scheme level, we define, as expected, a scheme to be \define{true} when all its
object-instances are true\rlap{.}\footnote{After all, this is the original aim of the introduction
of the scheme level: formally describe the object level. This differs from the scheme
semantics from~\cite{carneiro}, which considers more general models.}

\begin{theorem}[Soundness]
If a scheme is provable from a set of true schemes, then it is true.
\end{theorem}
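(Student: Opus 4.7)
The plan is to reduce scheme-level soundness to the standard soundness theorem for classical first-order logic, exploiting the object-instantiation bridge already established in the preceding subsection.

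By definition, to show that $\Phi$ is true, it suffices to fix an arbitrary object-instance $F$ of $\Phi$ and prove that $F$ is true in the usual first-order sense. Let $P$ be a proof of $\Phi$ from the set $S$ of true schemes. The key input is the observation already recorded in the paragraph preceding ``Independence'': from the scheme-level proof $P$ and the object-instance $F$ of the conclusion $\Phi$, one can construct an object-level proof of $F$ from object-instances of schemes in $S$. Concretely, one chooses an object-substitution $\tau$ witnessing that $F$ is an object-instance of $\Phi$, extends $\tau$ to every line of $P$ (renaming dummy metavariables to fresh object variables if necessary so that the DV~conditions $\DV(P)$ are respected), and verifies line-by-line that each resulting formula is either a hypothesis of $F$ or an object-instance of the scheme used at the corresponding line of $P$.

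Now by hypothesis every scheme in $S$ is true, which means by definition that every object-instance of every scheme in $S$ is true in every nonempty model of first-order logic. Hence the object-level derivation constructed above is a derivation of $F$ from true first-order formulas. Invoking the standard soundness theorem for classical first-order logic at the object level, we conclude that $F$ is true. Since $F$ was an arbitrary object-instance of $\Phi$, the scheme $\Phi$ is true.

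The only non-routine point is the renaming of dummy metavariables when translating $P$ through $\tau$, so that the disjointness conditions encoded in $\DV(P)$ are realized by genuinely distinct object variables; this is exactly the mechanism already sketched in the footnote defining the action of a substitution on a proof, and it presents no real obstacle. Everything else is either unwinding definitions or an appeal to the classical soundness theorem.
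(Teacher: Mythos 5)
Your proof is correct and follows exactly the paper's argument: fix an arbitrary object-instance $F$ of $\Phi$, convert the scheme-level proof into an object-level proof of $F$ from object-instances of schemes in $S$ (which are true by hypothesis), and invoke classical first-order soundness. The extra detail you give about extending the object-substitution to the lines of $P$ and renaming dummy metavariables is a harmless elaboration of what the paper delegates to the preceding section.
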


\begin{proof}
Let $P$ be a proof of $\Phi$ from a set $S$ of true schemes and let $F$ be an object-instance of $\Phi$.
As mentioned in the previous section, one can construct from $P$ a proof (at the object level)
of~$F$ from object-instances of schemes in $S$.
These object-instances of true schemes are true formulas with hypotheses.
Therefore, by soundness of first-order logic, $F$ is true.
Since $F$ can be any object-instance of $\Phi$, this implies that $\Phi$ is true.
\end{proof}

The axiom schemes of \texttt{TMM} are true, so \texttt{TMM} and its subsystems can only prove true
schemes.

A set of schemes is called \define{complete} (resp.\ \define{object-complete}) if all true schemes
are provable (resp.\ object-provable) from it.
Since provability implies object-provability, completeness implies object-completeness.
We now restate in our language the two fundamental completeness theorems regarding the
systems\footnote{We use this term synonymously with ``set of schemes\rlap{,}'' now that the
background theory has been established.} considered here.

\begin{theorem}[{\cite[Thm.~5]{tarski}}]
\label{thm:tarski}
The system~\texttt{T} is object-complete.
\end{theorem}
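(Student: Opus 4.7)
The plan is to reduce to Gödel's classical completeness theorem for first-order logic with equality, and then verify that any Hilbert-style object-level proof can be simulated from $\texttt{T}$. More precisely, suppose $\Phi$ is a true scheme and let $F$ be any of its object-instances. Then $F$ is a true formula with hypotheses of ordinary first-order logic with equality (on the given language), so by the standard completeness theorem it is derivable in some familiar Hilbert calculus $H$ whose axioms include all propositional tautologies, the generalization rule, the distribution axiom $\forall v (\phi \to \psi) \to (\forall v \phi \to \forall v \psi)$, the specialization axiom $\forall v \phi \to \phi[w/v]$ (whenever $w$ is substitutable for $v$ in $\phi$), vacuous quantification, and the usual equality axioms (reflexivity, and Leibniz substitution into atomic formulas).

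Next, the plan is to show that every $H$-axiom is provable from $\texttt{T}$ at the object level. The propositional cases reduce to completeness of $\texttt{propcalc}$ for classical propositional calculus (due to Robinson). The generalization rule, the distribution axiom, the vacuous quantification axiom, and the three equality axioms translate directly into \texttt{gen}, \texttt{ALLdistr}, \texttt{vacGen}, and the schemes of $\texttt{EQ}$. The denotation axiom \texttt{denot} is used to handle the nonempty-model assumption in Gödel's theorem (equivalently, to derive the traditional $\forall v \phi \to \exists v \phi$-style instances).

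The genuinely nontrivial step, and what I expect to be the main obstacle, is the specialization axiom $\forall v \phi \to \phi[w/v]$, because $\texttt{T}$ contains neither \texttt{spec} nor \texttt{subst}. Here I would follow exactly the strategy sketched in the remark after Proposition~\ref{prop:proof-instance}: first prove a bound-variable renaming lemma of the form
\[
\texttt{T} \vdash \Big( x \equiv y \to (\Phi \leftrightarrow \Phi^{x \leftarrow y}),\ \{\{y,m\} \mid m \in \{x\} \cup \OC(\Phi)\} \Big)
\]
for every metaformula $\Phi$ and every fresh $y$, by induction on the height of $\Phi$ (the base cases use $\texttt{EQ}$ and, at the object level, the predicate axiom schemes; the universal step uses \texttt{ALLdistr} and, for quantification over $x$ itself, a \texttt{cbvalvw}-style change of bound variable). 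Combined with the scheme \texttt{spw} (derivable in $\texttt{T}$ from \texttt{ALLdistr}, \texttt{gen}, \texttt{vacGen}, and propositional calculus), this yields object-provability of \texttt{spec} and, more generally, of the specialization-with-substitution schema needed by $H$.

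Finally, one assembles these pieces: given the $H$-derivation of $F$, replace each $H$-axiom instance by its $\texttt{T}$-derivation and each use of the $H$-rules by the corresponding $\texttt{T}$-rules (\texttt{mp}, \texttt{gen}). This produces a proof of $F$ from $\texttt{T}$; since $F$ was an arbitrary object-instance of the arbitrary true scheme $\Phi$, this establishes object-completeness of $\texttt{T}$.
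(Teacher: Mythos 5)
The paper does not actually prove this theorem: it is imported from Tarski's article (his Theorem~5, proved there for \texttt{T} together with \texttt{spec}), combined with Kalish--Montague's result that \texttt{spec} is object-redundant, so there is no internal proof to compare yours against. Judged on its own terms, your reduction to a reference Hilbert calculus $H$ via G\"odel completeness has the right overall shape, but it contains a genuine gap at exactly the step you identify as the main obstacle. The tools you invoke --- \texttt{spw} together with the renaming lemma $x \equiv y \to (\Phi \leftrightarrow \Phi^{x \leftarrow y})$ for \emph{fresh} $y$ --- yield only $\forall v\,\phi \to \phi$ and renaming of bound variables to fresh variables. They do not yield the $H$-axiom $\forall v\,\phi \to \phi[w/v]$ for an arbitrary $w$ that may already occur in $\phi$ (consider the valid instance $\forall v\, v \equiv w \to w \equiv w$), and that schema does not follow formally from \texttt{spec} plus fresh renaming.

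Closing this gap is the actual mathematical content of Tarski's proof (his Lemmas~13 and~16, which the paper cites elsewhere): one must show by induction on $\phi$ that $[w/v]\phi \coloneqq \forall v\,(v \equiv w \to \phi)$ is provably equivalent to the proper substitution $\phi[w/v]$ whenever $v$ and $w$ are distinct and $w$ is substitutable for $v$ in $\phi$, and that $\forall v\,\phi \to [w/v]\phi$ is derivable (the latter is easy from \texttt{gen}, \texttt{ALLdistr} and propositional calculus). Note also that the direction $[w/v]\phi \to \phi[w/v]$ is where \texttt{denot} is really needed --- without $\exists v\, v \equiv w$ the formula $[w/v]\phi$ could hold vacuously --- so its role is more central than merely ``handling the nonempty-model assumption.'' With that induction supplied, the rest of your assembly (propositional completeness of \texttt{propcalc}, and the direct translations of \texttt{gen}, \texttt{ALLdistr}, \texttt{vacGen}, \texttt{EQ}, and the predicate axiom schemes for the atomic base cases) does go through.
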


\begin{theorem}[{\cite[Thm.~9.7]{megill}}]
The system~\texttt{TMM} is complete for schemes with no DV~conditions involving two formula metavariables.
\end{theorem}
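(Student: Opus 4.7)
The plan is to reduce the scheme-level completeness of~\texttt{TMM} to the object-level completeness available via Theorem~\ref{thm:tarski} (since $\texttt{T}\subseteq\texttt{TMM}$, object-completeness is inherited). Let $\Phi$ be a true scheme whose DV~conditions do not pair two formula metavariables. I would construct a ``maximally generic'' object-instance $F$ of $\Phi$ in an enlarged language $\calL'\supseteq\calL$, prove $F$ at the object level, and then lift that proof back to a scheme-level proof of $\Phi$.

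Concretely: first, assign each variable metavariable $x_i\in\OC(\Phi)$ (plus enough extras for dummies) to a distinct fresh variable $w_i\in\VR$; this automatically witnesses all variable/variable DV~conditions of~$\Phi$. Second, for each formula metavariable $\phi_j\in\OC(\Phi)$, adjoin to $\calL$ a fresh predicate symbol~$R_j$ whose arity equals the number of variable metavariables~$x_i\in\OC(\Phi)$ with $\{x_i,\phi_j\}\notin\DV(\Phi)$, and send $\phi_j$ to the atom $R_j(w_{i_1},\dots,w_{i_k})$, whose arguments are exactly the images of those $x_i$ \emph{not} disjoint from $\phi_j$ in~$\Phi$. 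Because no DV~condition of~$\Phi$ pairs two formula metavariables, this assignment is a legitimate object-substitution: the only DV~conditions we must respect involve at most one formula metavariable, and are respected by the choice of variables vs.\ arguments. Since $\Phi$ is true, so is the resulting formula (with hypotheses) $F$; by Megill's object-completeness (applied in $\texttt{TMM}_{\calL'}$), there is an object-proof~$P$ of~$F$.

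The main step is a de-instantiation: turn $P$ into a scheme-proof of~$\Phi$ in $\texttt{TMM}_{\calL}$. Substitute each $w_i$ back to $x_i$ and each atom $R_j(w_{i_1},\dots,w_{i_k})$ back to~$\phi_j$ throughout~$P$, promoting the other fresh variables used as dummies to fresh variable metavariables. Each line of~$P$ that was an object-instance of some axiom scheme of $\texttt{TMM}_{\calL'}$ becomes an instance of the corresponding scheme of $\texttt{TMM}_{\calL}$; modus ponens and generalization are preserved verbatim. The variable-formula DV~conditions of~$\Phi$ are recovered because, by construction, the freshness of~$w_i$ as an argument of~$R_j$ precisely encodes the condition $\{x_i,\phi_j\}\in\DV(\Phi)$.

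The main obstacle is the translation of object-level uses of the \emph{predicate axiom schemes} for the fresh $R_j$, together with any manipulations the proof~$P$ performs on subformulas built out of the $R_j$'s. One must show that these uses correspond, at the scheme level, to deductions within $\texttt{TMM}_{\calL}$ from the axiom schemes that handle equality and substitution for arbitrary formulas, namely~\texttt{subst}, \texttt{ALLeq}, \texttt{ALLdistr}, and the schemes in~\texttt{EQ}. This is exactly the content that distinguishes~\texttt{TMM} from~\texttt{T}, and is where Megill's additional axioms do the work: the predicate axiom schemes for the fresh~$R_j$ are ``absorbed'' at the scheme level by \texttt{subst} applied to a formula metavariable, rather than to a concrete atom. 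Finally, the hypothesis that no DV~condition pairs two formula metavariables is essential here: otherwise, the generic instance would have to be chosen so that the arguments of $R_i$ and~$R_j$ share no common variable in $\calL'$, which is incompatible with making distinct variable metavariables of~$\Phi$ correspond to distinct variables of~$\calL'$, and the lifting argument would break.
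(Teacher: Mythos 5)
The paper does not prove this theorem: it is imported verbatim from Megill~\cite[Thm.~9.7]{megill}, so there is no in-paper argument to compare against. On its own merits, your overall strategy --- build a ``maximally generic'' object-instance by sending each $\phi_j$ to a fresh atom $R_j(w_{i_1},\dots,w_{i_k})$ whose arguments are exactly the variable metavariables not declared disjoint from $\phi_j$, prove that instance at the object level, then de-instantiate --- is indeed the right one, and it is essentially the shape of Megill's own proof. Your closing observation about why a DV~condition between two formula metavariables obstructs the construction is also correct in spirit (compare Proposition~\ref{prop:dv}, which shows the restriction is not merely an artifact of this method).

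The genuine gap is the de-instantiation step, which you name as ``the main obstacle'' but do not overcome, and which as literally described does not work. An object-level proof $P$ of the generic instance will in general contain occurrences of $R_j$ applied to argument tuples \emph{other} than the original $(w_{i_1},\dots,w_{i_k})$: these arise as soon as the object proof quantifies over, specializes, or renames one of the $w_i$, and they are forced by the predicate axiom schemes $\texttt{ax-}{R_j}_l$ themselves, whose two atoms have different arguments. The instruction ``substitute each atom $R_j(w_{i_1},\dots,w_{i_k})$ back to $\phi_j$'' therefore leaves most $R_j$-atoms of $P$ untranslated; and the scheme language has no primitive meaning for ``$\phi_j$ with $x_i$ renamed to $y$'' --- the only available surrogate is $\forall x_i(x_i \equiv y \to \phi_j)$, which has a different logical form and must be related to $\phi_j$ via \texttt{subst}, \texttt{ALLeq}, and \texttt{genEq}. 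Consequently the lifted derivation cannot be obtained line-by-line from $P$; one needs an induction that replaces each object-level step involving a renamed $R_j$-atom by a block of scheme-level reasoning, and proving that such blocks always exist is precisely the technical content of Megill's Theorem~9.7 and the lemmas preceding it. As written, your proposal assumes exactly the statement it is supposed to establish at this point, so the argument is incomplete rather than wrong in strategy.
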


\begin{remark}
Tarski proved his result with the scheme \texttt{spec} added to \texttt{T}, and Kalish--Montague
proved (\cite[Lem.~9]{km}) that \texttt{spec} is object-redundant, resulting in the present statement.
\end{remark}

\begin{remark}
The completeness results of Monk are a bit different, since he studies substitutionless logic.
Although he stays at the object level, his methods and results in the study of~\texttt{TM} can be
considered as an intermediate step between the object-completeness of~\texttt{T} and the
completeness of~\texttt{TMM}.
\end{remark}

As an aside, note that the system \texttt{TMM} contains three DV~conditions: one in \texttt{vacGen}
and two in \texttt{subst}. The DV~conditions in \texttt{subst} can be removed if one uses the
variant~\texttt{ax-12} of~\texttt{subst} described in Appendix~\ref{app:variants}, leaving only one
DV~condition in the axiom-schematization. The next proposition shows that this is optimal. It also
shows the necessity of the restriction ``with no DV~conditions involving two formula metavariables''
in Megill's completeness theorem.

\begin{proposition}\label{prop:dv}
An axiom-schematization of first-order logic that is complete (resp.\ complete for schemes with no
DV~conditions involving two formula metavariables, resp.\ with no DV~conditions involving formula
metavariables), contains at least one scheme that becomes false when removing its DV~conditions
involving two formula metavariables (resp.\ its DV~conditions involving formula metavariables,
resp.\ its DV~conditions).
\end{proposition}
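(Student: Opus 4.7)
The plan is to treat the three alternatives uniformly by a contradiction argument. For each, I exhibit a true witness scheme~$T_i$ lying in the class on which $\Sigma$ is assumed complete and whose DV conditions of the targeted type are \emph{essential}, meaning that the scheme~$T_i^-$ obtained by deleting those DVs is false:
\begin{align*}
T_1 &\coloneqq \forall x(\phi \vee \psi) \to (\forall x\phi \vee \forall x\psi),\; \DV(\phi,\psi),\\
T_2 &\coloneqq \phi \to \forall x\phi,\; \DV(x,\phi)\quad (\text{namely } \texttt{vacGen}),\\
T_3 &\coloneqq x\equiv y \to \forall z(x\equiv y),\; \DV(z,x),\DV(z,y).
\end{align*}
Each $T_i$ is true: $T_1$ because its DV forces, in every object-instance, the variable substituted for $x$ to lie in at most one of $\phi,\psi$, which reduces the scheme to the valid distribution of $\forall$ over~$\vee$; $T_2$ is vacuous generalization; and $T_3$ is the instance of $T_2$ with $\phi$ specialized to $x\equiv y$, the two listed DVs being exactly those propagated from $\DV(x,\phi)$ under this substitution. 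Each $T_i$ lies in its class ($T_1$ has only a formula--formula DV; $T_2$ has only a variable--formula DV, hence none between two formula metavariables; $T_3$ has only variable--variable DVs, hence none involving a formula metavariable). And each $T_i^-$ admits a false object-instance: for $T_1^-$, take $\phi \leftarrow x\equiv y$, $\psi \leftarrow \neg(x\equiv y)$ in a two-element model; for $T_2^-$ and $T_3^-$, identify via object-substitution the variable replacing $x$ (respectively $z$) with another variable already occurring in the body.

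Suppose for contradiction that every axiom of $\Sigma$ remains true when one deletes from it the DV conditions of the targeted type, and let $\Sigma'$ denote $\Sigma$ with those DVs so removed in each axiom. Then $\Sigma'$ is sound. The key technical point is that a proof~$P$ of a scheme~$\Phi$ from~$\Sigma$ is also a proof of $\Phi^-$ from~$\Sigma'$, where $\Phi^-$ is $\Phi$ with its DVs of the targeted type removed. Indeed, any substitution legitimate against a $\Sigma$-axiom's DVs is \emph{a fortiori} legitimate against the sparser DVs of the corresponding $\Sigma'$-axiom~$A'$. And the required inclusion $\DV(A')^\sigma \subseteq \DV(P^-)$ holds: the DVs remaining in $A'$ are by construction not of the targeted type, and since substitutions are type-preserving, propagation cannot manufacture a DV of the targeted type from a source that is not; so $\DV(A')^\sigma$ contains no targeted-type DV. These propagated DVs already belonged to $\DV(P) = \DV(\Phi) \cup (\text{dummies})$ by validity of the original proof, and being non-targeted, they survive in $\DV(\Phi^-) \cup (\text{dummies}) = \DV(P^-)$.

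Completeness of $\Sigma$ on the relevant class yields $\Sigma \vdash T_i$; the transfer gives $\Sigma' \vdash T_i^-$; soundness of $\Sigma'$ then makes $T_i^-$ true, contradicting its falsity. The hardest step is the DV bookkeeping in the transfer: one must verify in each case that removing from an axiom its DVs of the targeted type cannot erase a propagated DV needed by the original proof, which reduces to the type-preservation remark above (in case~1, for example, propagating a variable--variable or variable--formula axiom DV never yields a formula--formula DV). A secondary subtlety is producing a witness for case~3: true schemes with only variable--variable DVs that become false upon identifying variables are rare in first-order logic with equality, since one-element models trivialize many candidates, so we resort to pushing the variable--formula DV of~\texttt{vacGen} down to variable--variable DVs via the substitution $\phi \leftarrow x\equiv y$.
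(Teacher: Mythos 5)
Your proof is correct and follows essentially the same route as the paper's: exhibit a true witness scheme lying in the relevant completeness class whose targeted DV~conditions are essential, and derive a contradiction via the observation that type-preserving substitution cannot propagate a DV of the targeted kind from one that is not, so the de-DV'd proof remains a proof. Your witnesses for the first and third cases differ from the paper's (which uses $(\exists x\phi\to\forall x\phi)\vee(\exists x\psi\to\forall x\psi)\comma\DV(\phi,\psi)$ and \texttt{oneObj}, respectively), but they play exactly the same role, and your explicit DV bookkeeping just spells out what the paper dispatches in a parenthetical.
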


\begin{proof}
Suppose that there exists a complete axiom-schematization with no scheme becoming false when
removing its DV~conditions involving two formula metavariables.
Such a system proves the scheme
$(\exists x \phi \to \forall x \phi) \vee (\exists x \psi \to \forall x \psi) \comma \DV(\phi, \psi)$.
Indeed, this scheme is true (if $\phi$ and $\psi$ are disjoint, then at least one of them is
disjoint from~$x$).
Therefore, that same scheme without its DV~condition would also be provable: simply remove all
DV~conditions involving two formula  metavariables from its proof (this is still a proof because
DV~conditions involving two formula metavariables cannot be produced from DV~conditions of other
forms by Equation~\eqref{eq:propagate}).
But that scheme is false: for instance, substitute $x\equiv y$ for both $\phi$ and $\psi$.

For the second (resp.\ third) case, proceed similarly using the scheme \texttt{vacGen}
(resp.\ the scheme \texttt{oneObj}, see next remark).
\end{proof}

\begin{remark}
A result of Monk (\cite{monk2}) implies that there is no finite axiom-schematization with no
DV~conditions that is object-complete (let alone complete for all schemes involving no DV~conditions).
Indeed, when there are no DV~conditions in our schemes, our notion of object-instantiation reduces
to that article's notions of substitution and instantiation, so its main theorem proving the
non-existence of finite axiomatizations applies.
\end{remark}

\begin{remark}
On the other hand, there exist finite object-complete axiom-schematizations of first-order logic
with no DV~conditions involving formula metavariables.
An example can be obtained from \texttt{TMM} by replacing~\texttt{vacGen} with the axiom scheme
\begin{equation}
\forall x x \equiv y \to (\phi \to \forall x \phi) \comma \DV(x, y) \tag{\texttt{oneObj}}
\end{equation}
and adding for each nonlogical predicate~$P$ an axiom scheme analogous to \texttt{genEq}:
\begin{equation*}
\neg \forall x x \equiv x_1 \to ( \dots \to (\neg \forall x x \equiv x_n \to (P(x_1, \dots, x_n) \to \forall x P(x_1, \dots, x_n))) \dots) \tag{\texttt{gen$_P$}}.
\end{equation*}
We prove, following Megill, that this axiom-schematization is object-complete.
It suffices to prove that all object-like instances of \texttt{vacGen} are provable from it.
We do this by induction on the height of $\Phi$.
If $\Phi$ is an atomic formula involving the predicate $P$, then it is easily proved from
$\texttt{intuitcalc} \cup \{ \texttt{oneObj}, \texttt{gen}_P \}$ (this includes the equality
predicate since we use $\texttt{gen}_\equiv$ as a synonym of \texttt{genEq}).
The induction step in the case of an implication, a negation, or a universal quantification is
easily proved from \texttt{pure}\footnote{See
\href{https://us.metamath.org/mpeuni/hbim.html}{\texttt{hbim}},
\href{https://us.metamath.org/mpeuni/hbn.html}{\texttt{hbn}}, and
\href{https://us.metamath.org/mpeuni/hbal.html}{\texttt{hbal}} in \texttt{set.mm}, where the base
cases are given by
\href{https://us.metamath.org/mpeuni/ax5eq.html}{\texttt{ax5eq}} and
\href{https://us.metamath.org/mpeuni/ax5el.html}{\texttt{ax5el}}.} (see definition of this subsystem
in Figure~\ref{fig:subsystems} of Appendix~\ref{app:variants}).
\end{remark}

\begin{remark}
These results leave open the questions:
\begin{itemize}
\item
Are there finite axiom-schematizations with no DV~conditions involving formula metavariables that
are complete for all schemes of the same form?
(The previous remark shows that there are object-complete ones.)
\item
Are there finite axiom-schematizations that are complete for all schemes (including those with
DV~conditions involving two formula metavariables)?
\end{itemize}
\end{remark}

\section{Independence}
\label{sec:indep}

As mentioned above, since provability implies object-provability, object-independence (on any
given language) implies independence.
We begin by giving the easier object-independence results.
For some results, we suppose that the language contains a non-nullary nonlogical predicate, and
object-independence in the empty language remains an interesting open question.

\subsection{Object-independence}

The proofs in this subsection are at the object level, so we use valuations defined on formulas (and
not metaformulas), that is, functions $\val \colon \FM \to A$ where $A$ is a set of truth values,
which will be here of the form $\{0, 1, \ldots, n\}$.
A subset of these values are considered as true.
Valuations will be implicitly defined by induction on formula height.

\subsubsection{Propositional calculus}

\paragraph{Object-independence of modus ponens}

We actually have the stronger result that there is no object-complete finite axiom-schematization of
first-order logic such that the only axiom scheme with hypotheses is the rule of generalization.
Indeed, consider the formulas
$\bm\phi^n \coloneqq
(x_0 \equiv x_1 \to (x_1 \equiv x_2 \to \ldots (x_{n-1} \equiv x_n \to x_0 \equiv x_n)))$
for $n \geq 1$, expressing a general form of transitivity of equality.
Since they do not contain universal quantifiers, they are provable only if they are
object-instances of axiom schemes.
Suppose $\bm\phi^n \in \operatorname{Obj-Inst}(\Phi)$.
Then, we have $\Phi = (\Phi_0 \to (\Phi_1 \to \ldots (\Phi_{i-1} \to \Phi_i)))$ where
$0 \leq i \leq n $ and each~$\Phi_j$ with
$j < i$ is either a formula metavariable or is of the form $x_{a_j} \equiv x_{b_j}$, and~$\Phi_i$
is either a formula metavariable different from all its antecedents or is of the form
$x_c \equiv x_d$, in which case $i = n$.
The only allowed equality among the $x$-indices are $b_j = a_{i+1}$ and $c = a_0$ and $d = b_i$.
A straightforward verification shows that the only valid such scheme is, up to variable
metavariable renaming,
$x_0 \equiv x_1 \to (x_1 \equiv x_2 \to \ldots (x_{n-1} \equiv x_n \to x_0 \equiv x_n))$.
This leads to the need of infinitely many axiom schemes.

Another proof, which can be adapted to other systems, is to note that an axiom-schematization
as above would lead to a decidable set of universally valid formulas, which is not the case.

\paragraph{Object-independence of the axioms of propositional calculus}

If one ignores quantifiers, then the axiom schemes of \texttt{TMM} not in propositional calculus
nor the equality bloc become particularly simple.
Formally, ``ignoring quantifiers'' means choosing valuations $\val$ such that
$\val(\forall v_i \bm\phi) = \val(\bm\phi)$ for any $i \in \omega$ and any formula $\bm\phi \in \FM$.
We use such valuations to prove the object-independence of the propositional calculus axiom schemes.
We will define these valuations in terms of assignments of variables to elements of a structure,
$a \colon \VR \to X$.
In practice, only the \define{tautologous assignment} $a \colon \VR \to \omega, v_i \mapsto i$ will
be used to witness the counterexamples below.\footnote{
The extra step of using assignments is not necessary to proving object-independence (and the definitions
of the valuations below could be simplified accordingly), but it makes the counterexamples more readily
applicable to other contexts of deductive systems with substitution rules.}

\paragraph{Object-independence of \texttt{minimp}}

Consider the valuation given by the truth table
\begin{center}
\begin{tabular}{|c|ccccc|c|}
\hline
$\to$&0&1&2&3&4&$\neg$\\
\hline
$^*$0\phantom{$^*$}&0&1&1&1&1&2\\
1&0&0&0&0&0&0\\
2&0&0&0&0&0&0\\
3&0&0&4&0&4&1\\
4&0&0&3&3&0&1\\
\hline
\end{tabular}
\end{center}
where only~0 is considered true\rlap{,}\footnote{This table was found by computer search to solve a
similar problem in~\cite{jubin}.} and, for any assignment of variables $a\colon \VR \to X$, set
$\val(v_i \equiv v_j) \coloneqq 0 \text{ if $a(v_i) = a(v_j)$ else } 3$.
This validates \texttt{mp}, \texttt{notnotintro}, \texttt{simp}, and \texttt{id} (see
Appendix~\ref{app:labels} for the labels) for all assignments~$a$, and therefore all the axiom schemes not
in propositional calculus, including the axiom schemes in the equality bloc, and also \texttt{contrap},
\texttt{notelim}, and \texttt{peirce}, but falsifies the object-instance of \texttt{minimp} given by
$\phi \leftarrow v_0 \equiv v_0,
 \psi \leftarrow v_0 \equiv v_1,
 \chi \leftarrow v_0 \equiv v_0,
 \theta \leftarrow v_0 \equiv v_0,
 \tau \leftarrow \neg v_0 \equiv v_0$,
as seen using the tautologous assignment.

\paragraph{Object-independence of \texttt{peirce}}

Use the G\"odel (2)truth-table from~\cite{robinson}
\begin{center}
\begin{tabular}{|c|ccc|c|}
\hline
$\to$&0&1&2&$\neg$\\
\hline
$^*$0\phantom{$^*$}&0&1&2&2\\
1&0&0&2&2\\
2&0&0&0&0\\
\hline
\end{tabular}
\end{center}
where only~0 is considered true, and set
$\val(v_i \equiv v_j) \coloneqq 0 \text{ if $a(v_i) = a(v_j)$ else } 1$.
It validates intuitionistic propositional calculus, hence also the later axiom schemes, including
the axiom schemes in the equality bloc, but falsifies the object-instance of \texttt{peirce} given
by $\phi \leftarrow v_0 \equiv v_1, \psi \leftarrow \neg v_0 \equiv v_0$.

\paragraph{Object-independence of \texttt{contrap}}

Use the truth-table
\begin{center}
\begin{tabular}{|c|ccc|c|}
\hline
$\to$&0&1&2&$\neg$\\
\hline
$^*$0\phantom{$^*$}&0&1&1&1\\
1&0&0&0&0\\
2&0&0&0&1\\
\hline
\end{tabular}
\end{center}
where only 0 is considered true, and set
$\val(v_i \equiv v_j) \coloneqq 0 \text{ if $a(v_i) = a(v_j)$ else } 2$.
It validates implicational calculus and \texttt{notnotintro}, and therefore all the axiom schemes
not in propositional calculus.
It validates the axiom schemes in the equality bloc, and also \texttt{notelim}, but falsifies the
object-instance of \texttt{contrap} given by
$\phi \leftarrow v_0 \equiv v_1, \psi \leftarrow v_0 \equiv v_0$.

\quad

From this point on, valuations have to satisfy implicational calculus (that is,
$\{\texttt{mp}, \texttt{minimp}, \texttt{peirce}\}$).
Therefore, we make the following definitions.
If $P$ is a proposition (in the informal language), then\footnote{This notation is called the
Iverson bracket, see D.\ E.\ Knuth, \textit{Two notes on notation}, Amer. Math. Monthly 99 (1992),
no. 5, 403--422, \href{https://arxiv.org/abs/math/9205211}{arXiv:math/9205211}[math.HO].}
$[P]$ is equal to 1 if $P$ is true, else 0.
A valuation $\val \colon \FM \to \{0, 1\}$ is an \define{imp-valuation} if
$\val(\bm\phi \to \bm\psi) = \max (1 - \val(\bm\phi), \val(\bm\psi))$ for all formulas
$\bm\phi, \bm\psi \in \FM$.
It is a \define{pc-valuation} if furthermore $\val(\neg \bm\phi) = 1 - \val(\bm\phi)$ for all
formulas $\bm\phi \in \FM$.
An \define{imp$_\equiv$-valuation} (resp.\ a \define{pc$_\equiv$-valuation}) is an imp-valuation
(resp.\ a pc-valuation) that is \define{standard on equality}, that is, such that
$\val(v_i \equiv v_j) = [i = j]$.\footnote{For quantifier-free formulas, this valuation
corresponds to satisfaction in the structure~$\omega$ for the tautologous assignment.}

\paragraph{Object-independence of \texttt{notelim}}

Use the imp$_\equiv$-valuation which ignores quantifiers and is always true on negations, that is,
such that $\val(\neg \bm\phi) = 1$ for all~$\bm\phi \in \FM$.
Then, all axiom schemes are validated except~\texttt{notelim}: consider its instance given by
$\phi \leftarrow v_0 \equiv v_0, \psi \leftarrow v_0 \equiv v_1$.

\subsubsection{Modal bloc}

\paragraph{Object-independence of generalization}

Because \texttt{subst} is a generalization of a scheme over a variable that is free in it, a proof
of the independence of \texttt{gen} is a bit harder to find than in~\cite{km} or~\cite{monk}.
Note also that $\{\texttt{propcalc}, \texttt{EQrefl}, \texttt{genEq}\} \vdash \forall x x \equiv x$.

Since \texttt{propcalc}, \texttt{EQ} and \texttt{spec} have to hold, we define $\val$ to be a
pc$_\equiv$-valuation\footnote{%
A priori, we could allow $\val$ to interpret $\equiv$ like any equivalence relation on the domain
of discourse (with the assignment from the previous footnote), but in order that it validate
\texttt{subst}, it is nearly constrained to interpret $\equiv$ as equality on the domain of
discourse (with the assignment from the previous footnote).} given on quantified formulas by
\begin{equation}
\val(\forall v_i \bm\phi ) \coloneqq \val(\bm\phi) \val_i(\bm\phi)
\end{equation}
for all $i \in \omega$ and $\bm\phi \in \FM$, where the $\val_i$'s are valuations.
Since $\val$ has to satisfy \texttt{ALLdistr}, we assume that each $\val_i$ is an imp-valuation.
We also assume that $\val_i$ ignores universal quantifiers, so that \texttt{ALLcomm} is validated.
More precisely, we set $\val_i(\forall v_j \bm\phi) \coloneqq \val_i(\bm\phi)$ if $i \neq j$ and
$\val_i(\forall v_i \bm\phi) \coloneqq 1$.
On atomic formulas, we define
\begin{equation}
\val_i(v_j \equiv v_k ) \coloneqq
[j = k \text{ or } i \notin \{j, k\}]
\end{equation}
for $i, j, k \in \omega$.
Finally, $\val_i$ is defined on negated formulas by:
\begin{align*}
\val_i(\neg (\bm\phi \to \bm\psi)) &\coloneqq  \val_i(\bm\phi \to \bm\psi),\\
\val_i(\neg\neg \bm\phi) &\coloneqq  \val_i(\neg \bm\phi),\\
\val_i(\neg v_j\equiv v_k) &\coloneqq  \val_i(v_j\equiv v_k),\\
\val_i(\neg \forall v_j \bm\phi) &\coloneqq 1
\end{align*}
for $i, j, k \in \omega$ and $\bm\phi, \bm\psi \in \FM$.
In other words, $\val_i$ ignores negations not followed by a universal quantifier and validates
negations of universally quantified formulas.

Since $\val$ is a pc-valuation, it validates \texttt{propcalc}.
Since $\val$ and each $\val_i$ are imp-valuations, $\val$ validates \texttt{ALLdistr}.
Since $\val(\forall v_i \bm\phi) \leq \val(\bm\phi)$, $\val$ validates \texttt{spec}.
Since $\val$ is standard on equality, it validates \texttt{EQ}.

Let $\FV(\bm\phi)$ denote the set of free variables occurring in the formula $\bm\phi$.
A proof by induction on formula height shows that if $v_i \notin \FV(\bm\phi)$, then
$\val_i(\bm\phi) = 1$.
Therefore, $\val$ validates \texttt{modal5} and \texttt{vacGen}.
One has $\val(\forall v_i \forall v_j \bm\phi) = \val(\bm\phi) \val_i(\bm\phi) \val_j(\bm\phi)$
if $i \neq j$, which is symmetric in $i, j$.
Therefore, $\val$ validates \texttt{ALLcomm}.
One has
$\val(\forall v_j \neg v_j \equiv v_i) = \val(\neg v_j \equiv v_i) \val_j(v_j \equiv v_i) = 0$
(even when $i=j$).
Therefore, $\val$ satisfies \texttt{denot} (even when its DV~condition is dropped).

If $i \neq j$, then $\val(v_i \equiv v_j) = \val_i(v_i \equiv v_j) = 0$, and $\val_i$ is an
imp-valuation, so $\val$ validates \texttt{subst}.
Also, $\val$ validates \texttt{genEq}, since it actually validates
$v_j \equiv v_k \to \forall v_i v_j \equiv v_k$.
It also validates the strong denotation axiom $\neg \forall v_j \neg v_j \equiv v_i$
and all its generalizations.
Finally, $\val$ validates \texttt{ALLeq} since $\val(\forall v_i v_i \equiv v_j) = [i = j]$.

To show that $\val$ does not validate \texttt{gen}, note that
$\val(\neg v_0 \equiv v_1) = 1 - [0 = 1] = 1$ and $\val_0(\neg v_0 \equiv v_1) = [0 = 1] = 0$
so $\val(\forall v_0 \neg v_0 \equiv v_1) = 0$.

Another proof of independence (but not object-independence), due to Mario Carneiro, is given in
Appendix~\ref{app:gen}.

\paragraph{Independence of \texttt{ALLdistr}}

One can prove the scheme $\forall x (\phi \to \psi) \andd \forall x \phi \rimplies \forall x \psi$
from $\{ \texttt{mp}, \texttt{gen}, \texttt{spec} \}$.
Therefore, the independence of \texttt{ALLdistr} cannot be proved by simple valuations.

We prove the object-independence of \texttt{ALLdistr} in $\texttt{TMM} \setminus \{ \texttt{spec} \}$
using Monk's valuation (\cite[Thm.~9, Part~4 (C4)]{monk}).
Let $\val$ be the pc-valuation evaluating atomic formulas to 1 and such that
$\val(\forall v_i (\bm\phi \to \bm\psi)) = 1$ for all $\bm\phi, \bm\psi \in \FM$ and
$\val(\forall v_i \bm\phi) = \val(\bm\phi)$ for all $\bm\phi \in \FM$ that is not an implication.
Then, $\val$ validates all the axiom schemes of $\texttt{TMM} \setminus \{\texttt{spec}\}$ except
\texttt{ALLdistr}: take for instance $x \leftarrow v_0$, $\phi \leftarrow v_0 \equiv v_0$,
$\psi \leftarrow \neg v_0 \equiv v_0$.

We also prove the object-independence of \texttt{ALLdistr} in
$\texttt{TMM} \setminus \{ \texttt{vacGen}, \texttt{subst} \}$ in any language containing at least
two nonlogical predicates (of any arity) as follows.
Recall that a \emph{neighborhood model} for a classical non-normal modal logic is given by a set
of worlds~$W$ with a neighborhood function $N \colon W \to \calP \calP W$ and a valuation~$\vDash$
satisfying propositional calculus and such that $w \vDash \square \bm\phi$ if and only if
$\{ v \in W \mid v \vDash \bm\phi \} \in N(w)$.
After translating first-order formulas into modal ones as usual (map universal quantifiers to
necessity), consider a neighborhood model $(W, N)$ where
$W = \{ w_1, w_2, w_3 \}$ and
$N(w_1) = \{ \{w_1\}, \{ w_1, w_2 \}, W \}$ and
$N(w_2) = \{ \{w_1, w_2\}, \{ w_2, w_3 \}, W \}$ and
$N(w_3) = \{ \{w_3\}, \{ w_1, w_3 \}, \{ w_2, w_3 \}, W \}$.
Let $P$ and $Q$ be two distinct fixed nonlogical predicates of the language.
Consider a valuation such that $P$ is false exactly at $w_3$ and $Q$ is true exactly at~$w_2$
(regardless of their arguments).
Then, $(P \to Q)$ is true exactly at $\{ w_2, w_3 \}$, so
$w_2 \vDash \forall v_0 (P \to Q)$ and
$w_2 \vDash \forall v_0 P$ but
$w_2 \not\vDash \forall v_0 Q$.
On the other hand, $W$ is a neighborhood of each world, so the model validates the rule of
generalization, and the model is such that $w \in U$ for all $w \in W$ and $U \in N(w)$, so
it validates \texttt{spec}, and is such that $U \notin N(v)$ implies
$\{ w \in W \mid U \notin N(w) \} \in N(v)$, so it validates \texttt{modal5}.
Finally, take this valuation to validate every equality everywhere.

Finally, we prove the independence of \texttt{ALLdistr} in \texttt{TMM}.
We consider the languages $\calL = \varnothing$ and $\calL_\text{obj} = \{P, Q \}$.
These two predicates are evaluated as in the previous paragraph and we denote by $R$
the set of true formulas for the corresponding valuation.
We define the function $A \colon \Schemes \to \calP(\FMwithhyp)$ by
\[
A(\Phi) \coloneqq \left\{ \tau(\Phi) \in \operatorname{Obj-Inst}(\Phi) \middle|
P, Q \notin \tau(\bigcup \DV(\Phi))
\right\}
\]
for any $\Phi \in \Schemes$.
By $P \notin \tau(\bigcup \DV(\Phi))$, we mean that $P$ does not occur in the formulas $\tau(\phi)$
with $\phi \in \bigcup \DV(\Phi)$ (we do not use the function $\OC$, which we defined only for
metavariable occurrences, not for occurrences of any symbols).

Using the notation of Proposition~\ref{prop:valid-sound-obj}, we consider as ``true'' the schemes
in $T_{A, R}$.
In order to prove that that set is closed under provability, by Proposition~\ref{prop:valid-sound-obj},
it suffices to prove that $A \circ \Inst \subseteq A$.

Let $\Psi \in A(\Inst(\Phi))$.
There exists an object-instantiation $\tau$ such that $\Psi = \tau(\Chi)$ with $\Chi \in \Inst(\Phi)$
and $P, Q \notin \tau(\bigcup \DV(\Chi))$.
Since $\Chi \in \Inst(\Phi)$, there exist $\sigma, D$ such that
$\Chi = (\Phi^\sigma, \DV(\Phi^\sigma) \cup D)$.
We want to prove that $\Psi \in A(\Phi)$.
Consider the object-instantiation $\tau' \coloneqq \tau \circ \sigma$.
It suffices to prove that $P, Q \notin \tau'(\bigcup \DV(\Phi))$.
Note that $\tau(\sigma(\bigcup \DV(\Phi))) = \tau(\OC(\sigma(\bigcup \DV(\Phi)))) \cup
\bigl( \sigma(\bigcup \DV(\Phi)) \setminus \OC(\sigma(\bigcup \DV(\Phi))) \bigr)$,
so it suffices to prove that $P, Q$ are in none of those two sets.
First, $P, Q \notin \calL$ implies $P, Q \notin \sigma(\bigcup \DV(\Phi))$.
Second, by Equation~\eqref{eq:propagate}, we have
$\OC(\sigma(\bigcup \DV(\Phi))) = \bigcup \DV(\Phi^\sigma)$.
Therefore, $P, Q \notin \tau(\bigcup \DV(\Chi)) \supseteq \tau(\bigcup \DV(\Phi^\sigma))
= \tau(\OC(\sigma(\bigcup \DV(\Phi))))$.

In that model, \texttt{ALLdistr} is still not satisfied, all the other axiom schemes in
$\texttt{TMM} \setminus \{\texttt{vacGen}, \texttt{subst}\}$ are still satisfied, and now both
\texttt{vacGen} and \texttt{subst} are also satisfied: indeed, any $\bm\phi \in A(\texttt{vacGen})$
is of the form $\bm\psi \to \forall v_i \bm\psi$ with $P, Q \notin \OC(\bm\psi)$, so
$w_i \Vdash \bm\psi$ for either all worlds or for none, and similarly for \texttt{subst}.

\paragraph{Object-independence of \texttt{spec}}

The axiom scheme \texttt{spec} is object-provable from~\texttt{T} (since~\texttt{T} is
object-complete) but independent in~\texttt{TMM}.

\begin{remark}
The pc-valuation which is false on every atomic formula and true on every universally quantified
formula validates all axiom schemes in
$\texttt{TMM} \cup \{ \forall x x \equiv x \} \setminus \{ \texttt{EQrefl} \}$ except \texttt{spec},
proving object-independence of \texttt{spec} in the version of \texttt{TMM} where \texttt{EQrefl} is
replaced by its universal closure.
\end{remark}

We prove the object-independence of \texttt{spec} in $\texttt{TMM} \setminus \{\texttt{ALLeq}\}$
when the language
has a non-nullary nonlogical predicate (but no associated predicate axiom scheme).
This implies independence when the language has a non-nullary nonlogical predicate, hence also
when it does not by Remark~\ref{rmk:language}.
Alternatively, we could also consider that predicate to be in $\calL_\text{obj} \setminus \calL$.
Consider a model of first-order logic without equality, with domain $D = \{ a, b\}$ and
interpret~$\equiv$ as the total relation (that is, its graph is~$D^2$).
Consider a unary predicate~$P$ which is interpreted to true on~$a$ and false on~$b$.
Finally, interpret ``$\forall v_i$'' as ``$\forall v_i \in \{a\}$'' (more precisely, modify the
standard interpretation of universally quantified formulas accordingly).
Then, all the axiom schemes of $\texttt{TMM} \setminus \{\texttt{ALLeq}\}$ are true except
\texttt{spec}: its instance $\forall v_0 P(v_0) \to P(v_0)$ is false since the~$v_0$ in the
consequent can be assigned to~$b$.

To rephrase this example, we can say that we have a \emph{domain of quantification}~$\{a\}$ which
is strictly included in the \emph{domain of discourse}~$D$, so that specialization does not hold.
In order that \texttt{denot} hold, we need that all elements in the domain of discourse be equal
to an element in the domain of quantification, which is why we interpreted~$\equiv$ as the total
relation.
Note that \texttt{ax-12} is not true in this model (this is necessary, since one can prove
specialization from~\texttt{T} and \texttt{ax-12}), although \texttt{subst}~is.

In the next section, we give a proof of the independence of \texttt{spec}, by proving independence
of its instance $\forall x x \equiv y \to x \equiv y$, in~\texttt{TMM}.

\paragraph{Independence of \texttt{modal5}}

The axiom scheme \texttt{modal5} is object-provable from~\texttt{T} (since~\texttt{T} is
object-complete) but independent in $\texttt{TMM}$, as a standard Kripke model (see for
instance~\cite{modal}) followed by a restricted object-instantiation rule shows: consider two
worlds, $A$ and $B$, with a reflexive accessibility relation which furthermore connects $A$ to $B$.
Interpret equality as always true and ``$\forall v_i$'' for any $i \in \omega$ as necessity.
Introduce a predicate $P$ (at the object level) which is true in $A$ but not in $B$.
Then, $A \not\Vdash \exists v_i P \to \forall v_i \exists v_i P$, so \texttt{modal5} does not hold,
but the other axioms in $\texttt{TMM} \setminus \{\texttt{vacGen}, \texttt{subst}\}$ do.
This proves object-independence in $\texttt{TMM} \setminus \{\texttt{vacGen}, \texttt{subst}\}$.
Let $R$ be the set of formulas which are true in that sense.

Similarly as in the proof of independence of \texttt{ALLdistr}, consider the languages
$\calL = \varnothing$ and $\calL_\text{obj} = \{P \}$ and
define the function $A \colon \Schemes \to \calP(\FMwithhyp)$ by
\[
A(\Phi) \coloneqq \left\{ \tau(\Phi) \in \operatorname{Obj-Inst}(\Phi) \middle|
P \notin \tau(\bigcup \DV(\Phi))
\right\}
\]
for any $\Phi \in \Schemes$.
As in the proof of independence of \texttt{ALLdistr}, we use Proposition~\ref{prop:valid-sound-obj},
whose hypotheses are proved to hold in a similar way as above, and declare ``true'' the schemes
in $T_{A, R}$.

In that model, \texttt{modal5} is still not satisfied, all the other axiom schemes in
$\texttt{TMM} \setminus \{\texttt{vacGen}, \texttt{subst}\}$ are still satisfied, and now both
\texttt{vacGen} and \texttt{subst} are also satisfied, for the same reason as in the proof of
independence of \texttt{ALLdistr}.

\subsubsection{Vacuous generalization}

The axiom scheme \texttt{vacGen} is independent in \texttt{TMM} by
Proposition~\ref{prop:dv}\rlap{.}\footnote{
Since \texttt{subst} is still true without the condition $\DV(y, \phi)$, see for instance the proof
\url{https://us.metamath.org/mpeuni/bj-ax12.html}.}
Equivalently, declare a scheme to be $*$-true if the scheme obtained from it by ignoring its
DV~conditions involving formula metavariables is true.
Then, \texttt{vacGen} is the only axiom scheme in \texttt{TMM} which is not $*$-true.
We leave the question of object-independence in~\texttt{TMM} open\rlap{.}\footnote{The comment at
\url{https://us.metamath.org/mpeuni/ax5ALT.html} mentions its object-redundancy, but this is
relative to a larger axiom-schematization.}

\subsubsection{Equality bloc}

We consider the following models of first-order logic without equality:
\begin{itemize}
\item
\texttt{EQrefl}: a non-empty domain, equality is interpreted as the empty relation
(so $v_i \equiv v_j$ is always evaluated to false).
\item
\texttt{EQsymm}: domain $\{0, 1\}$, the graph of equality is $\{0, 1\}^2 \setminus \{(1, 0)\}$.
\item
\texttt{EQtrans}: domain $\{0, 1, 2\}$, the graph of equality is
$\{0, 1, 2\}^2 \setminus \{(0, 2), (2, 0)\}$.
\end{itemize}
The first model proves object-independence of \texttt{EQrefl} in \texttt{TMM}.
The other two prove object-independence  of \texttt{EQsymm} and of \texttt{EQtrans} in
$\texttt{TMM} \setminus \{ \texttt{subst}, \texttt{Alleq}\}$.

We can also prove object-independence of \texttt{EQsymm} and of \texttt{EQtrans} in
$\texttt{TMM} \setminus \{ \texttt{denot} \}$ by considering respectively the following two
pc-valuations ignoring quantifiers: let $\val(v_i \equiv v_j) = [i \leq j]$ for the first, and
$\val(v_i \equiv v_j) = [|i-j| \leq 1]$ for the second.

\subsubsection{Denotation axiom scheme}

Let $\val$ be the pc$_\equiv$-valuation that ignores quantifiers.
Then, all axiom schemes are validated except for \texttt{denot}.
This proves its object-independence in \texttt{TMM}.

\subsubsection{Substitution axiom scheme}

The model used in~\cite[Thm.~9, Part~8 (C8)]{monk} proves the independence of \texttt{subst} in a
system related to~\texttt{TMM}.
We specialize that model to prove independence in~\texttt{TMM}, even when the latter is augmented
with the axiom schemes \texttt{oneObj} and $\texttt{gen}_P$
(but without the predicate axiom scheme associated with $P$)
(see Appendix~\ref{app:labels} for these names).

The language consists of one nonlogical predicate, say~$P$, which is unary.
The domain of discourse has three objects, say 0, 1, 2.
Equality is interpreted as the equivalence relation with equivalence classes $\{0, 1\}$ and $\{2\}$.
The predicate~$P$ holds for exactly~0.

Having more than one equivalence class for $\equiv$ (\textit{i.e.,} having unequal objects) makes
the formula $\forall v_i v_i \equiv v_j$ true exactly when $i = j$, making the verifications easier.

\subsubsection{Predicate axiom schemes}

Suppose that the language has $n$ nonlogical predicate symbols, $\calL = \{ P_1, \ldots, P_n \}$
with $P_i$ of arity~$a_i$.
There is no loss of generality since only a finite number of predicates can occur in any proof.
The corresponding $\sum_{i=1}^n a_i$ predicate axiom schemes have the form
\begin{equation}
y \equiv z \to (P_i(x_1, \ldots, y, \ldots, x_{a_i}) \to P_i(x_1, \ldots, z, \ldots, x_{a_i}))
\tag{\texttt{ax-${P_i}_j$}}.
\end{equation}
where $y$ and $z$ are at the $j^{\mathrm{th}}$ position, for $1 \leq i \leq n$ and $1 \leq j\leq a_i$.

Fix $i$ and $j$ as above
and consider the pc-valuation $\val$ which ignores quantifiers and is always true on equalities
(that is, $\val(v_k \equiv v_l) = 1$ for any $k, l \in \omega$), and such that
$\val(P_k(v_{s(1)}, \ldots, v_{s(a_i)})) = [k = i \text{ and } s(j) = 0]$.
This proves object-independence of \texttt{ax-${P_i}_j$} in \texttt{TMM}.

\subsection{Supertruth and partial independence of \texttt{ALLcomm}}

Since the system \texttt{T} is object-complete, proving independence of the axiom schemes in
$\texttt{TMM}\setminus\texttt{T}$ is generally a harder task.
In the cases of \texttt{spec} and \texttt{subst}, this was done by adding nonlogical predicates
to the language without the associated predicate axiom schemes.
The method presented here is different, and can prove, for instance, that even some true
formula-metavariable-free schemes, like the instance $\forall x x \equiv y \to x \equiv y$ of
\texttt{spec}, or the instance $\forall x \forall y z \equiv t \to \forall y \forall x z \equiv t$
of \texttt{ALLcomm}, are not provable from \texttt{T}.

\begin{remark}
Monk's proof of the independence of \texttt{spec} in a related system
(\cite[Thm.~12, Part~3 (A6)]{monk}) does not apply here.
Indeed, that system contains only sentences (closed formulas), and for instance \texttt{denot}
is evaluated to false (its closure is of course evaluated to true).
His proof of the independence of \texttt{ALLcomm} (Part~1 (A4) of the same theorem) relies on
the order of variables, because he works in a substitutionless calculus, but that method cannot
be applied here, where variable metavariables are interchangeable.
\end{remark}

To ease the reading of the proof, we introduce the following definitions and notation.
A \define{quantified subformula} is a subformula\footnote{More properly, ``submetaformula\rlap{.}''}
beginning with (and not merely containing) a universal quantifier.
If it begins with ``$\forall x_i$'' then we say that it is \define{$i$-quantified}.
The operation on metaformulas that consists in replacing every occurrence of $x_j$ within any
$i$-quantified subformula (equivalently, within the scope of any quantification over $x_i$) with
$x_i$ is called the \define{$(i, j)$-transform}.
The \define{$(i, j)$-transform of a scheme} is the scheme resulting from the $(i, j)$-transforms of
its hypotheses and conclusion, with its DV~conditions unchanged\rlap{.}\footnote{Except of course
removing the DV~conditions involving $x_j$ if it does not occur in the resulting scheme.}

The result of the $(i, j)$-transform on the metaformula or scheme $\Phi$ is denoted by $\Phi^{(i, j)}$.
If $\Phi^{(i, j)} = \Phi$, we say that the $(i, j)$-transform acts trivially, or is \define{trivial}, on~$\Phi$.
This is the case for instance when $\{x_i, x_j\} \notin \calP_2(\OC(\Phi))$.
The $(i, j)$-transform is \define{legitimate} on the scheme $\Phi$ if $\{x_i, x_j\} \notin \DV(\Phi)$\rlap{.}\footnote{Note that an $(i, i)$-transform is legitimate and trivial on any scheme.}

\begin{remark}
One can think of an $(i, j)$-transform as a way to set quantified subformulas to True or False with more freedom than with usual models.
In the words of Mario Carneiro, it can be thought of as a ``deliberate bound variable capture\rlap{.}''
\end{remark}

Using the notation from Proposition~\ref{prop:valid-sound}, we denote by~$R$ the set of
hypothesis-free schemes such that all the legitimate $(i, j)$-transforms of all their instances
are true, and we call \define{supertrue} the schemes in~$S_{\Inst, R}$.
Since~$R$ is closed under instantiations, Proposition~\ref{prop:valid-sound} gives
$R = S_{\Inst, R} \cap \Schemeshypfree$.

\begin{remark}
A few remarks are in order:
\begin{itemize}
\item
When a hypothesis or the conclusion of a scheme is considered as a scheme itself (for example when asserting or asking about its truth or supertruth), then it is understood to be the scheme with no hypotheses, with that formula as conclusion, and with the DV~conditions inherited from the scheme.
For example, if $(\{\Phi_1, \Phi_2\}, \Phi_0, D)$ is a scheme, and we mention ``the hypothesis $\Phi_1$ as a scheme'', then we mean, the scheme $(\varnothing, \Phi_1, D)$ (by our above convention, we do not need to write $D \cap \mathcal{P}_2(\OC(\Phi_1))$ explicitly).

\item
Recall that we do include the trivial $(i, j)$-transforms.
In other words, ``all legitimate $(i, j)$-transforms of $\Phi$'' is the same thing as ``$\Phi$ and all its legitimate $(i, j)$-transforms\rlap{.}''

\item
The definition may seem to have one unnecessary level of instantiation (namely, two levels instead of one), but this is subtly not the case: the first instantiation is of the whole scheme, while the second is, independently, of the given hypothesis or conclusion.
As an example, consider the scheme $\phi \rimplies \psi$.
If we did not have the first instantiation, then the definition would read ``if all transforms of all instances of $\phi$ are true, then all transforms of all instances of $\psi$ are true'', and this is vacuously true since obviously not all instances of $\phi$ are true ($\bot$ is not true).
Therefore, the scheme $\phi \rimplies \psi$ would be supertrue, clearly something that we do not want.
With the correct definition, we are allowed to first instantiate the scheme, and one such instance is $\top \rimplies \psi$.
Now, all transforms of all instances of $\top$ are true, but this is not the case for $\psi$ (again, $\bot$ is not true).
Therefore, $\phi \rimplies \psi$ is not supertrue.

\item
For hypothesis-free schemes, however, one can remove one level of instantiation, since, as noted above,
$R = S_{\Inst, R} \cap \Schemeshypfree$.
Namely, a hypothesis-free scheme is supertrue if and only if all the legitimate $(i, j)$-transforms of all its instances are true.
Combined with the first item, this shows that a scheme is supertrue if and only if all its instances are such that whenever their hypotheses are supertrue, so is their conclusion.

\item
One could be tempted to simplify the definition to: ``a scheme is supertrue if all the legitimate $(i, j)$-transforms of all its instances are true\rlap{.}''
That property implies supertruth, but not conversely: with that definition, the rule of generalization would not be supertrue as the following example due to Mario Carneiro shows: the rule of generalization instantiates to $(x \equiv y \to \phi) \rimplies \forall x (x \equiv y \to \phi)$, which $(x, y)$-transforms to $(x \equiv y \to \phi) \rimplies \forall x (x \equiv x \to \phi)$, which is not true: consider the instance given by $\phi \leftarrow x \equiv y$.

\item
For any metaformula or scheme $\Phi$, one has $\OC(\Phi^{(i, j)}) \subseteq \OC(\Phi)$.

\item
For any metaformula or scheme $\Phi$, one has $\Phi^{x_j \leftarrow x_i} = \bigl(\Phi^{(i, j)}\bigr)^{x_j \leftarrow x_i}$, so  $\Phi^{x_j \leftarrow x_i}$ is an instance of $\Phi^{(i, j)}$.
\end{itemize}
\end{remark}

\begin{proposition}\label{prop:prov}
A scheme provable from a set of supertrue schemes is supertrue.
\end{proposition}

\begin{proof}
This follows from Proposition~\ref{prop:valid-sound} applied with $A = \Inst$ and $R$ the set of hypothesis-free schemes all legitimate $(i, j)$-transforms of instances of which are true.
\end{proof}

\begin{lemma}
A true quantifier-free scheme is supertrue.
\end{lemma}

\begin{proof}
Let $\Phi$ be a quantifier-free scheme and let $\Psi$ be an instance of $\Phi$.
Let $i, j \in \omega$.
Since $\Phi$ is quantifier-free, the $i$-quantified subformulas of $\Psi$ necessarily come from
formula metavariables $\phi_k$ occurring in~$\Phi$.
Therefore, $\Psi^{(i, j)}$ is also an instance of $\Phi$: if $\Psi$ was obtained by substituting
$\Chi$ for $\phi_k$, then ${\Psi}^{(i, j)}$ is obtained by substituting $\Chi^{(i, j)}$ for $\phi_k$.
Therefore, if $\Phi$ is true and all hypotheses of $\Psi$ are supertrue, then its conclusion $\Psi_0$ is supertrue.
Therefore, if $\Phi$ is true, then it is supertrue.
\end{proof}

\begin{lemma}
The rule of generalization \texttt{gen} is supertrue.
\end{lemma}

\begin{proof}
Let $\Phi$ be a metaformula and let $i, j, k \in \omega$.
One has $(\forall x_k \Phi)^{(k, j)} = \forall x_k \Phi^{x_j \leftarrow x_k}$.
If $i \neq k$, then $(\forall x_k \Phi)^{(i, j)} = \forall x_k \Phi^{(i, j)}$.
Since an instance of \texttt{gen} has the form $(\{\Phi\}, \forall x_k \Phi, D)$, the above proves that \texttt{gen} is supertrue.
\end{proof}

\begin{lemma}
The schemes \texttt{ALLdistr}, \texttt{modalD}, \texttt{modal4}, \texttt{modal5}, \texttt{vacGen}, \texttt{denot}, \texttt{subst}, \texttt{genEq} are supertrue.
\end{lemma}

\begin{proof}
\texttt{ALLdistr}, \texttt{modalD}, \texttt{modal4}, \texttt{modal5}:
An $i$-quantified subformula in an instance of one of these schemes with $i \neq 0$ comes from within a formula metavariable, so as in the case of quantifier-free schemes, any of its $(i, j)$-transforms with $i \neq 0$ is an instance of the scheme.
On the other hand, the result of a $(0, j)$-transform is also an instance, obtained by the substitution $x_j \leftarrow x_0$.

\texttt{vacGen}:
Let $\Phi$ be an instance of \texttt{vacGen}.
It is of the form $\Psi \to \forall x_k \Psi \comma \DV(x_k, \OC(\Psi))$.
If the $(i, j)$-transform is nontrivial on $\Phi$, then $j \in \OC(\Psi)$.
Therefore, if it is also legitimate, then $i \neq k$ because of the DV~conditions.
Therefore, any $i$-quantified subformula of $\Phi$ is a subformula of $\Psi$.
Therefore, $\Phi^{(i, j)} = (\Psi^{(i, j)} \to \forall x_k \Psi^{(i, j)})$.
Since $\OC(\Psi^{(i, j)}) \subseteq \OC(\Psi)$, we obtain that the $(i, j)$-transform of $\Phi$ is an instance of \texttt{vacGen}.
Therefore, all $(i, j)$-transforms of all instances of \texttt{vacGen} are instances of \texttt{vacGen} so are true, so \texttt{vacGen} is supertrue.

\texttt{denot}:
This scheme has no legitimate nontrivial $(i, j)$-transforms.
Its strengthening obtained by removing its DV~condition has a legitimate nontrivial
$(i, j)$-transform, which is $x \equiv x \to \exists x x \equiv x$, which is true.

\texttt{subst}:
Let $\Phi$ be an instance of \texttt{subst}.
Up to variable renaming, it is characterized by the formula $\Psi$ that is substituted for $\phi$.
All legitimate $(i, j)$-transforms act similarly on both occurrences of $\Psi$, so that the
resulting scheme $\Phi^{(i, j)}$ is an instance of \texttt{subst} so is true.

\texttt{genEq}:
The only nontrivial $(i, j)$-transforms are the $(x, y)$-transform and the $(x, z)$-transform, and
both make an antecedent false.
\end{proof}

\begin{proposition}\label{prop:ALLcomm}
In the axiom system $\texttt{TMM} \setminus \{\texttt{spec}, \texttt{ALLeq} \}$, the
scheme~\texttt{ALLcomm} cannot be weakened by adding the DV~condition $\DV(x, y)$ or $\DV(y, \phi)$;
in particular, it is independent.
\end{proposition}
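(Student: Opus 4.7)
The plan is to extend the supertruth method and show that the DV-weakened scheme $\forall x \forall y \phi \to \forall y \forall x \phi \comma \DV(x,y)$ is itself supertrue. Once this is established, Propositions~\ref{prop:prov} and~\ref{prop:valid} together imply that every scheme provable from $(\texttt{TMM} \setminus \{\texttt{spec}, \texttt{ALLcomm}, \texttt{ALLeq}\}) \cup \{\forall x \forall y \phi \to \forall y \forall x \phi \comma \DV(x,y)\}$ is supertrue. Since \texttt{ALLcomm} itself is not supertrue by the previous proposition, it cannot be provable from this enlarged system, which is exactly the assertion that \texttt{ALLcomm} cannot be weakened by adding $\DV(x,y)$ in $\texttt{TMM} \setminus \{\texttt{spec}, \texttt{ALLeq}\}$; independence of \texttt{ALLcomm} in that system follows a fortiori.

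The core step is the supertruth verification of the weakened scheme. Since it has no hypotheses, it suffices to show that every legitimate $(i,j)$-transform of every instance is true. A generic instance has, after renaming, the form $\Psi = (\forall x \forall y \Phi \to \forall y \forall x \Phi, D)$ with $\{x, y\} \in D$, and legitimacy of $(i,j)$ forces $\{x_i, x_j\} \neq \{x, y\}$. I split on whether $x_i \in \{x, y\}$. If $x_i \notin \{x, y\}$, neither side of the implication is $x_i$-quantified at the top level, so all $x_i$-quantified subformulas lie inside $\Phi$ and the transform acts identically on both sides, producing $\forall x \forall y \Phi^{(i,j)} \to \forall y \forall x \Phi^{(i,j)}$, an instance of the weakened scheme and hence true. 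If $x_i \in \{x, y\}$, legitimacy forces $x_j \notin \{x, y\}$; the outer $\forall x_i$ is $x_i$-quantified on both sides, so every occurrence of $x_j$ inside $\Phi$ lies within an $x_i$-quantified subformula on both sides, and the transform amounts to the uniform syntactic substitution $x_j \leftarrow x_i$ inside $\Phi$, producing $\forall x \forall y \Phi^{x_j \leftarrow x_i} \to \forall y \forall x \Phi^{x_j \leftarrow x_i}$, again an instance of the weakened scheme.

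The main obstacle is the bookkeeping in the second case, where the outer $x_i$-quantified scopes on the two sides differ---for instance, $\forall x \forall y \Phi$ on the left versus $\forall x \Phi$ on the right when $x_i = x$---so one has to check that these distinct scopes nevertheless each capture exactly the occurrences of $x_j$ inside $\Phi$, and no others in the outer quantifier structure, so that the transform acts symmetrically. The one case my argument cannot handle is $\{x_i, x_j\} = \{x, y\}$, which is precisely the transform used in the previous proposition to falsify the full \texttt{ALLcomm}, and which is exactly ruled out by the added DV~condition $\DV(x, y)$; this dovetailing between the unique failure mode of \texttt{ALLcomm} and the DV~condition is the conceptual content of the proposition.
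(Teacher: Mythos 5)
Your proposal is correct and follows essentially the same route as the paper: reduce the claim to the supertruth of the DV-weakened scheme (combined with the non-supertruth of \texttt{ALLcomm} and preservation of supertruth under proofs), and then check that every legitimate $(i,j)$-transform of every instance is again an instance of the weakened scheme. The paper compresses this verification into the single remark that such transforms ``reduce to substitutions''; your case split on whether $x_i$ is one of the outer quantified metavariables is just that remark spelled out.
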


\begin{proof}
An instance of \texttt{ALLcomm} is of the form
$\Psi \coloneqq \forall x \forall y \Phi \to \forall y \forall x \Phi$ (with possibly DV conditions)
where $\Phi \in \MF$.
Its $(i, j)$-transforms other than the $(x, y)$-transform and the $(y, x)$-transform affect
both occurrences of $\Phi$ similarly, so they are instances of \texttt{ALLcomm}, hence are true.
Its $(y, x)$-transform is
$\forall x \forall y \Phi^{x \leftarrow y} \to \forall y \forall y \Phi^{x \leftarrow y}$,
which is essentially an instance of \texttt{spec}, hence is true.
Its $(x, y)$-transform is
$\forall x \forall x \Phi^{y \leftarrow x} \to \forall y \forall x \Phi^{y \leftarrow x}$.

An instance of $(\texttt{ALLcomm}, \DV(x, y))$ is of the form $(\Psi, \DV(x, y))$ with~$\Psi$
as above.
Since the $(x, y)$-transform is illegitimate on it, $(\texttt{ALLcomm}, \DV(x, y))$ is supertrue.
An instance of $(\texttt{ALLcomm}, \DV(y, \phi))$ is of the form
$(\Psi, \{ \{y, m \} \mid m \in \OC(\Phi) \})$ with~$\Psi$ as above.
Its $(x, y)$-transform (legitimate or not) is essentially generalization over a nonfree variable,
hence is true.
Therefore, $(\texttt{ALLcomm}, \DV(y, \phi))$ is supertrue.

On the other hand, the instance
$(\forall x \forall y z \equiv t \to \forall y \forall x z \equiv t, \{\{x, z\}, \{x, t\}\})$
of $(\texttt{ALLcomm}, \DV(x, \phi))$ is not supertrue since its $(x, y)$-transform is
$(\forall x \forall x z \equiv t \to \forall y \forall x z \equiv t, \{\{x, z\}, \{x, t\}\})$,
which is not true, as its instance $z \leftarrow y$ shows.

By the above paragraphs, the three previous lemmas, and Proposition~\ref{prop:prov}, all schemes
provable from $\texttt{TMM} \setminus \{\texttt{spec}, \texttt{ALLeq}, \texttt{ALLcomm} \}$ and
the above two weakenings of \texttt{ALLcomm} are supertrue.
Since \texttt{ALLcomm} is not, it is not provable from them.
\end{proof}

\subsection{Variants of supertruth, partial independence of \texttt{ALLeq}, and independence of  \texttt{spec}}

\paragraph{Hull-supertruth}

A first variant is the following notion: define the hull of any set of schemes to be the smallest
set of schemes containing it and closed under instantiation and legitimate $(i, j)$-transforms,
and then define a scheme to be hull-supertrue if for all of its instances, if all the schemes in
the hull of its hypotheses are true, then all the schemes in the hull of its conclusion are true.
Hull-supertruth is stronger for hypothesis-free schemes, but need not be for general schemes.
All the supertrue axiom schemes of \texttt{TMM} are hull-supertrue.

\paragraph{``Disjointing supertruth''}

We define the $\pp{i}{j}$-transform to be the $(i, j)$-transform followed by adding all DV conditions,
that is, $\Phi^{\pp{i}{j}} \coloneqq (\Phi^{(i, j)}, \calP_2(\MV))$.
With this version, all supertrue axiom schemes remain supertrue: this is trivial for hypothesis-free
axiom schemes, and straightforward for the two rules of inference.
The scheme \texttt{ALLcomm} is now valid with this notion: by the proof of
Proposition~\ref{prop:ALLcomm}, it suffices to prove that the $\pp{x}{y}$-transform
$(\forall x \forall x \Phi^{y \leftarrow x} \to \forall y \forall x \Phi^{y \leftarrow x}, \calP_2(\MV))$
of the instance $\Psi$ of \texttt{ALLcomm} is true, and this is the case since it is essentially an
instance of \texttt{vacGen}.

An instance of \texttt{ALLeq} is of the form
$\Psi \coloneqq \forall x x \equiv y \to (\forall x \Phi \to \forall y \Phi)$ (with possibly DV conditions)
where $\Psi \in \Schemes$.
Its $\pp{i}{j}$-transforms other than the $\pp{x}{y}$-transform are of the form
$(\forall x x \equiv y \to (\forall x \Phi^{(i, j)} \to \forall y \Phi^{(i, j)}), \calP_2(\MV))$,
which is true (the antecedent is false in models with at least two elements and the
consequent is true in models with one element).
Its $\pp{x}{y}$-transform is
$(\forall x x \equiv x \to (\forall x \Phi^{y \leftarrow x} \to \forall y \Phi^{(x,y)}), \calP_2(\MV))$.

Since the $\pp{x}{y}$-transform is illegitimate on $(\texttt{ALLeq}, \DV(x, y))$, that scheme is valid.
An instance of $(\texttt{ALLeq}, \DV(y, \phi))$ is of the form
$(\Psi, \{ \{y, m \} \mid m \in \OC(\Phi) \})$ with~$\Phi, \Psi$ as above and $y \notin \OC(\Phi)$.
The latter condition implies $\Phi^{y \leftarrow x} = \Phi^{(x,y)} = \Phi$, so its $\pp{x}{y}$-transform
is a consequence of $\{ \texttt{propcalc}, \texttt{spec}, \texttt{vacGen}\}$.
An instance of $(\texttt{ALLeq}, \DV(x, \phi))$ is of the form
$(\Psi, \{ \{x, m \} \mid m \in \OC(\Phi) \})$ with~$\Phi, \Psi$ as above and $x \notin \OC(\Phi)$.
The $\pp{x}{y}$-transform is legitimate only if $y \notin \OC(\Phi)$, and is then
$(\forall x x \equiv x \to (\forall x \Phi \to \forall y \Phi), \calP_2(\MV))$, which is a
consequence of $\{ \texttt{propcalc}, \texttt{spec}, \texttt{vacGen}\}$.

On the other hand, \texttt{ALLeq} is not valid: the $\pp{x}{y}$-transform of its instance
$\forall x x \equiv y \to (\forall x x \equiv y \to \forall y x \equiv y)$ is
$\forall x x \equiv x \to (\forall x x \equiv x \to \forall y x \equiv y), \{\{x, y\}\})$,
which is not true.
This proves:

\begin{proposition}
\label{prop:ALLeq}
In the axiom system $\texttt{TMM} \setminus \{\texttt{spec} \}$, the scheme~\texttt{ALLeq} cannot
be weakened by adding any DV~condition;
in particular, it is independent.
\end{proposition}

\paragraph{Semisupertruth}

We define the \define{$\{i, j\}$-transform} to be the combination of the $(i, j)$-transform
and the $(j, i)$-transform.
It is legitimate on a scheme when any (hence both) of these is legitimate.
Performing these two transforms in different orders gives schemes which are identical up to
renaming some bound variables, but for the sake of definiteness, one can ask that the
$(i, j)$-transform is performed first if $i < j$.
If one simply replaces $(i, j)$-transforms with $\{i, j\}$-transforms in the definition of
supertruth, there are already some differences.
For instance, the scheme $\forall x x \equiv y \to \forall y x \equiv y$
(\href{https://us.metamath.org/mpeuni/ax-c11n.html}{\texttt{ax-c11n}} in \texttt{set.mm}) is not
supertrue but satisfies this new notion.

We define the $\bb{i}{j}$-transform to be the $\{i, j\}$-transform followed by adding all
DV conditions.
Finally, we define a scheme to be \define{semisupertrue} if it is true and the following is true of
all its instances: if all the legitimate $\bb{i}{j}$-transforms of all the instances of its
hypotheses are true, then all the legitimate $\bb{i}{j}$-transforms of all the instances of
its conclusion are true.

All axiom schemes in $\texttt{TMM} \setminus \{\texttt{spec}\}$ are semisupertrue.
This is proved as above for axiom schemes in $\texttt{TMM} \setminus \{\texttt{ALLcomm}, \texttt{ALLeq}, \texttt{spec}\}$.
The only non-trivial $\bb{i}{j}$-transform of an instance of \texttt{ALLcomm} is of the form
$(\forall x \forall x \Phi^{y\leftarrow x} \to \forall y \forall y \Phi^{x\leftarrow y}, \calP_2(\MV))$,
which is true since $x$ is required to be disjoint from any metavariable in $\Phi^{x\leftarrow y}$
and $y$ from any metavariable from $\Phi^{y\leftarrow x}$.
The only non-trivial $\bb{i}{j}$-transform of an instance of \texttt{ALLeq} is of the form
$(\forall x x \equiv x \to (\forall x \Phi^{y \leftarrow x} \to \forall y \Phi^{x \leftarrow y},
\{\{x, y\}\}$) which is true for the same reason.

The instance of \texttt{spec} $\forall x x \equiv y \to x \equiv y$ is not semisupertrue since
its $\bb{i}{j}$-transform is $(\forall x x \equiv x \to x \equiv y, \{\{x, y\}\})$, which is not true.
However, the weakening of \texttt{spec} $(\forall x \phi \to \phi, \{\{x, \phi\}\})$ is semisupertrue.
This proves:

\begin{proposition}
\label{prop:spec}
In the axiom system \texttt{TMM}, the scheme~\texttt{spec} cannot
be weakened by adding any DV~condition; in particular, it is independent.
\end{proposition}

\begin{remark}
In this section, I used supertruth and some of its variants as devices to prove independence results.
I do not know whether they are of independent interest.
I leave their semantic study (for instance, giving an axiom-schematization for supertrue statements)
to future work.
\end{remark}

\appendix

\section{An elementary example of an independent but object-provable scheme}
\label{app:arithm}

In this appendix, we present an example of a Metamath system in which a scheme is independent
but all its variable-free instances are provable from a set of schemes. This is only
superficially similar to the example in the main part of the article, since here there is no
distinction between scheme level and object level.

The expressions are formed from the constants $\Term, \Nat, 0$, and~$'$ (prime).
We denote by $n$ a variable.
We posit that every variable, 0, and the prime of a term are terms.
We posit that 0 is a Nat, and that if $n$ is a $\Nat$, then so is $n'$.
Symbolically, our axiom schemes are:
\begin{gather*}
\Term n\\
\Term 0\\
\Term n'\\
\Nat 0\\
\Nat n \rimplies \Nat n'
\end{gather*}
The first three axioms can be considered as syntactic axiom schemes.
In this system, the scheme $\Nat n$ is independent although all its variable-free instances,
like $\Nat 0'$ or $\Nat 0''$, are provable.
More formally, the following is a valid Metamath database:
\begin{quote}
\begin{verbatim}
$c Term Nat 0 ' $.
$v n $.
on $f Term n $.
o0 $a Term 0 $.
os $a Term n ' $.
n0 $a Nat 0 $.
${ ns.1 $e Nat n $. ns $a Nat n ' $. $}
n1 $p Nat 0 ' $= o0 n0 ns $.
n2 $p Nat 0 ' ' $= o0 os n1 ns $.
nn $p Nat n $= ? $. $( not provable $)
\end{verbatim}
\end{quote}

The reason why the scheme $\Nat n$ is independent is that the system lacks a sort of induction
axiom scheme. This is similar to Robinson's arithmetic Q, where commutativity of addition is not
provable although any specific instance of it is.

\section{Comments and variants of the \texttt{TMM} system}
\label{app:variants}

Before mentioning the variants, we first show in Figure~\ref{fig:subsystems} various subsets
of the set of axiom schemes of \texttt{TMM} and the logics they axiomatize. The abbreviations
should be self-explanatory (``minimal implicational calculus\rlap{,}'' ``paraconsistent
calculus\rlap{,}'' etc.).

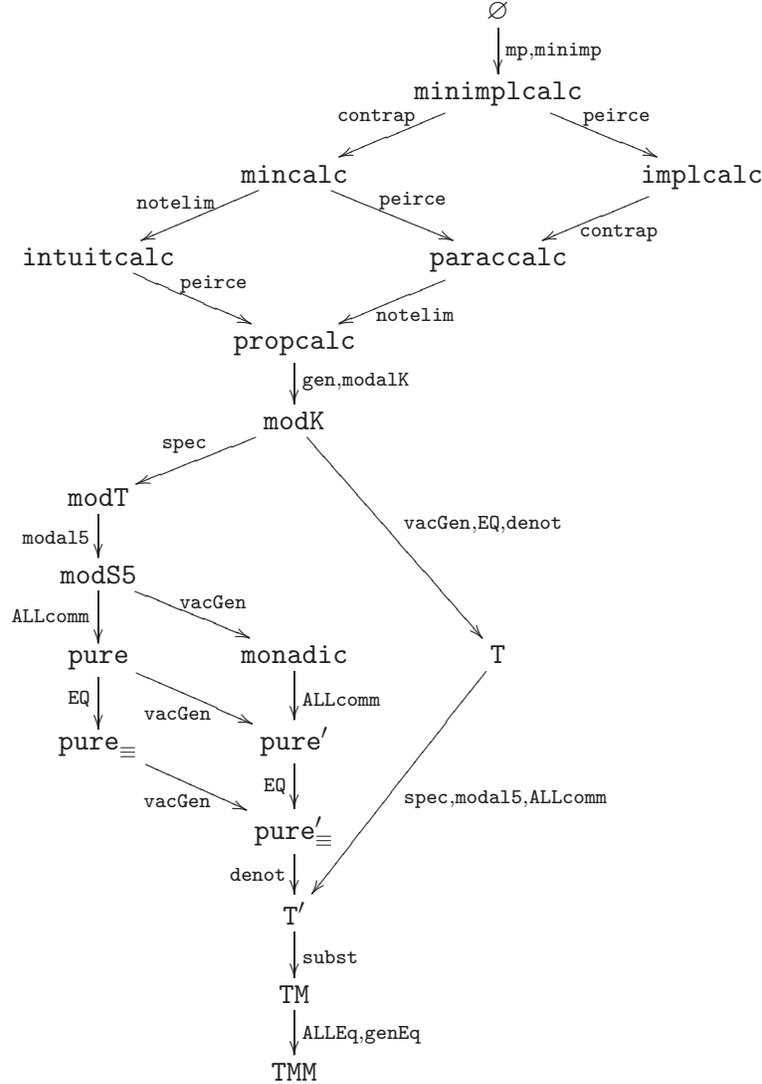
\begin{figure}[!ht]
\begin{equation*}
\xymatrix@=1.5em{
&& \varnothing \ar[d]^{\texttt{mp}, \texttt{minimp}}\\
&& \texttt{minimplcalc} \ar[dr]^{\texttt{peirce}} \ar[dl]_{\texttt{contrap}} \\
& \texttt{mincalc} \ar[dr]^{\texttt{peirce}} \ar[dl]_{\texttt{notelim}}
 && \texttt{implcalc} \ar[dl]^{\texttt{contrap}}\\
\texttt{intuitcalc} \ar[dr]^{\texttt{peirce}} && \texttt{paraccalc} \ar[dl]^{\texttt{notelim}}\\
& \texttt{propcalc} \ar[d]^{\texttt{gen}, \texttt{modalK}}\\
& \texttt{modK} \ar[dl]_{\texttt{spec}} \ar[dddr]^{\texttt{vacGen}, \texttt{EQ}, \texttt{denot}}\\
\texttt{modT} \ar[d]_{\texttt{modal5}} \\
\texttt{modS5} \ar[d]_{\texttt{ALLcomm}} \ar[dr]^{\texttt{vacGen}}\\
\texttt{pure} \ar[d]_{\texttt{EQ}} \ar[dr]_{\texttt{vacGen}}
 & \texttt{monadic} \ar[d]^{\texttt{ALLcomm}}
 & \texttt{T} \ar[dddl]^{\texttt{spec}, \texttt{modal5}, \texttt{ALLcomm}}\\
\texttt{pure}_\equiv \ar[dr]_{\texttt{vacGen}} &\texttt{pure}' \ar[d]_{\texttt{EQ}}\\
&\texttt{pure}_\equiv' \ar[d]_{\texttt{denot}}\\
& \texttt{T}' \ar[d]^{\texttt{subst}}\\
& \texttt{TM} \ar[d]^{\texttt{ALLeq}, \texttt{genEq}}\\
& \texttt{TMM}
}
\end{equation*}
\caption{Some subsystems of first-order logic}
\label{fig:subsystems}
\end{figure}

For the subsystems of propositional calculus in that figure, we refer to~\cite{robinson}.
Beware that the metalogic has no notion of free and bound variables, so the logics labeled
\texttt{monadic}, \texttt{pure}, and their variants may be weaker than expected.
In particular, in $\texttt{pure}_\equiv'$, the symbol $\equiv$ denotes an equivalence relation,
but not necessarily equality: it is equality only once \texttt{denot} has been assumed.

Some of the comments below relate the axioms of~\texttt{TMM} with the axioms used in the
Metamath database \href{https://us.metamath.org/mpeuni/mmset.html}{\texttt{set.mm}}.
The statement labeled \texttt{xxx} in that database will typically be denoted by
\texttt{set.mm/xxx}.
A table of correspondence can be found in Table~\ref{tab:labels} of Appendix~\ref{app:labels}.

\subsection{The propositional calculus bloc}

The sole inference rule of the propositional calculus part of all the variants mentioned here
is modus ponens \texttt{mp}.
A family of variants axiomatizes minimal implicational calculus differently than with the sole
scheme \texttt{minimp} (due to {\L}ukasiewicz).
One can for instance use the axiomatization \texttt{SK} (as done in \texttt{set.mm})
or \texttt{BCKW} or $\texttt{B}'\texttt{KW}$\rlap{.}\footnote{
As usual, we name implicational calculus tautologies after the combinator they correspond to.}
Although these systems are known to be independent, it is not the case anymore for \texttt{BCKW}
and $\texttt{B}'\texttt{KW}$ when one adds the Peirce axiom~\texttt{P}, since by a classical
result of {\L}ukasiewicz, both \texttt{BCP} and $\texttt{B}'\texttt{KP}$ already axiomatize
implicational calculus.
As for \texttt{SK}, the truth table used above to prove independence of \texttt{minimp} also
proves independence of~\texttt{S} in that axiomatization; a computer search for truth tables
might answer the question of independence of~\texttt{K}, but some later axioms become instances
of the identity axiom~\texttt{I} and of~\texttt{K} when quantifiers are ignored, so models may be
harder to find.

Independently of the above, one can replace the set
$\{\texttt{peirce}, \texttt{contrap}, \texttt{notelim}\}$ with the sole scheme
$(\neg \phi \to \neg \psi) \to (\psi \to \phi)$ (as done in \texttt{set.mm}) and the independence
results are obviously preserved.
One can also replace $\{\texttt{peirce}, \texttt{notelim}\}$ with $\neg \neg \phi \to \phi$,
and the independence results are obviously preserved as well.

Tarski used the axiomatization of propositional calculus
$\{\texttt{syl}, \texttt{clavius}, \texttt{notelim}\}$, and Kalish--Montague and Monk proved their
independence in the larger system they study (but their proofs do not apply to \texttt{TMM} since
some later axiom schemes are not satisfied by the models they use).

There are many other known independent axiomatizations of propositional calculus, but proofs of
independence in the larger system \texttt{TMM} have to be found.

The axiomatization we use, inspired by~\cite{robinson}, has the advantage of having semantic
significance: six of its eight subsystems obtained from using \texttt{minimp} and a combination of
the other three axioms correspond to a well-studied logic: (minimal implicational, minimal,
implicational, intuitionistic, paraconsistent, classical)-calculus.

\subsection{The modal bloc}

The three modal axioms of our system are $\texttt{ALLdistr} = \texttt{modalK}$,
$\texttt{spec} = \texttt{modalT}$, and $\texttt{modal5}$.
Using the language of modal logic, one can replace the bloc\footnote{%
As is customary in modal logic, we denote by ``T5'' the conjunction of the modal axioms~T and~5,
which are denoted here by \texttt{modalT} and \texttt{modal5} respectively, and similarly for
similar expressions.} T5 with DB4 (or TB4).
The scheme \texttt{modalB} is not semisupertrue (use the same instance and transform as in the
proof for \texttt{spec}), while \texttt{modalD} and \texttt{modal4} are, so it is independent in
the variant of $\texttt{TMM} \setminus \{\texttt{ALLeq}\}$ using DB4 instead of T5.
On the other hand, \texttt{modalD} is provable from
$\texttt{modK} \cup \{ \texttt{EQrefl}, \texttt{denot} \}$.
Finally, in $\texttt{modK} \cup \{ \texttt{modalB}\}$, the axiom schemes \texttt{modal4} and
\texttt{modal5} (which are supertrue) are equivalent, so in any system containing
$\texttt{modK} \cup \{ \texttt{modalB} \}$, they are either both provable or both independent.

\subsection{The vacuous generalization axiom scheme}

$\texttt{vacGen}: \phi \to \forall x \phi \comma \DV(x, \phi)$
\vspace{0.5em}

The scheme \texttt{vacGen} says that one can universally quantify a formula over a variable not
occurring in it.
Over classical calculus, it is equivalent to its dual $\exists x \phi \to \phi \comma \DV(x, \phi)$
and to $\exists x \phi \to \forall x \phi \comma \DV(x, \phi)$ (with the help of \texttt{spec} for
the reverse implication).

\subsection{The equality bloc}

$\texttt{EQrefl}: x \equiv x$\\
$\texttt{EQsymm}: x \equiv y \to y \equiv x$\\
$\texttt{EQtrans}: x \equiv y \to (y \equiv z \to x \equiv z)$
\vspace{0.5em}

In \texttt{TMM}, we used the characterization of an equivalence relation as being a reflexive, transitive, symmetric one.
One can also use the characterization: reflexive and (left or right)-Euclidean: both characterizations are equivalent over \texttt{minimplcalc}.
Recall that a binary relation~$\equiv$ is left (resp.\ right)-Euclidean if it satisfies~\texttt{ax-$\equiv_2$} (resp.~\texttt{ax-$\equiv_1$} or \texttt{EQeucl}) of Table~\ref{tab:labels} of Appendix~\ref{app:labels}.
As their names suggest, these axiom schemes can also be seen as the predicate axiom schemes associated with the predicate $\equiv$.

The independence proofs are similar, since this bloc is fairly well separated from the rest of the axiom schemes: when \texttt{EQeucl} replaces $\{ \texttt{EQsymm}, \texttt{EQtrans} \}$ in the axiomatization, the model given above for the independence of \texttt{EQrefl} (resp.\ \texttt{EQtrans}) proves the independence of \texttt{EQrefl} (resp.\ \texttt{EQeucl}).

In the presence of right-Euclideanness, one can weaken the requirement of reflexivity to that of right-seriality, that is, the property expressed by the axiom $\texttt{denot}'$.
See the next subsection for that axiomatization.

One can weaken \texttt{EQsymm} by adding the DV~condition $\DV(x, y)$, and recover the full scheme from it and \texttt{monadic}.
One can also weaken \texttt{EQtrans} by adding the DV~conditions $\DV(x, y)$ and $\DV(x, z)$ (or $\DV(x, z)$ and $\DV(y, z)$), and recover the full scheme from it and \texttt{monadic}.
We do not know if we can fully unbundle \texttt{EQtrans}, that is, require that all variables be disjoint.

We can also take the universal closures of these axiom schemes, since \texttt{spec} and \texttt{gen} show that they are equivalent to them.

\subsection{The denotation axiom scheme}

$\texttt{denot}:
x \equiv x \to \neg\forall y \neg y \equiv x \comma \DV(x, y)$
\vspace{0.5em}

Tarski used the axiom scheme $\texttt{denot}'$: $\exists x x \equiv y \comma \DV(x, y)$ in place\footnote{Tarski
used it both with and without the DV~condition.
For a logic with terms, the corresponding axiom scheme should be $\exists x x \equiv t \comma \DV(x, t)$ (see~\cite[\S~4]{km}) and there, the DV~condition is necessary (else one could apply the substitution $t \leftarrow \{x\}$ in a well-founded set theory).
This gives a motivation for keeping the DV~condition in the term-less case, and proving that it is sufficient.}
of $\texttt{denot}$.
More precisely, Tarski's and the present axiomatizations correspond respectively to the left and right hand side of the bi-entailment $\{\texttt{EQeucl}, \texttt{denot}'\} \dashv\vdash_\texttt{modK} \{\texttt{EQrefl}, \texttt{EQsymm},  \texttt{EQtrans}, \texttt{denot}\}$.

We chose our axiom schemes in order to better separate the equality bloc from the rest of the axiom schemes.
In \texttt{denot}, the addition of the antecedent $x \equiv x$ could be seen as a cheap trick to guarantee independence of \texttt{EQrefl}, but more interestingly, \texttt{denot} can be seen as existential generalization applied to the unary predicate $\bm{\cdot} \equiv x$.
It is also related to the proposed view in free logic that an object exists (equivalently, denotes) if and only if it is equal to itself.

As noted above, $\texttt{modK} \cup \{ \texttt{EQrefl}, \texttt{denot} \}$ proves \texttt{modalD},
so if one wanted to use for the modal bloc the axiomatization DB4 and keep independence
of~\texttt{modalD}, then one could weaken \texttt{denot} to its generalization
$\forall x (x = x \to \exists y y = x) \comma \DV(x, y)$.

Finally, one could take as axiom the contrapositive $\forall y \neg y \equiv x \to \neg x \equiv x$,
which can be seen as universal instantiation for the predicate ``$\neg \:\bm{\cdot} \equiv x$\rlap{.}''
Then, independence of \texttt{contrap} could be simply proved by considering the
imp$_\equiv$-valuation which ignores quantifiers and is always false on negations.

\subsection{The substitution axiom scheme}

$\texttt{subst}:
\forall x (x \equiv y \to (\phi \to \forall x ( x \equiv y \to \phi))) \comma \DV(x, y)\comma \DV(y, \phi)$
\vspace{0.5em}

One can replace the axiom schemes \texttt{spec} and \texttt{subst} by a version of \texttt{subst}
without its initial quantifier, call it
\href{https://us.metamath.org/mpeuni/ax12v2.html}{\texttt{ax12v2}} as in \texttt{set.mm}, which
also contains the closely related \href{https://us.metamath.org/mpeuni/ax12v.html}{\texttt{ax12v}}
and \href{https://us.metamath.org/mpeuni/ax-12.html}{\texttt{ax-12}}.
One has $\{\texttt{spec}, \texttt{subst}\} \dashv\vdash_\texttt{T} \texttt{ax12v} \dashv\vdash_\texttt{T} \texttt{ax12v2}\dashv_\texttt{T} \texttt{ax-12}$ (with converse entailment when \texttt{genEq}
or \texttt{ax-13} is added).
The schemes \texttt{ax12v}, \texttt{ax12v2}, \texttt{ax-12} are independent in the new systems, by
the same proof of Monk.
Independence of \texttt{gen} is also much easier to show in these systems: use the pc-valuation such
that $\val(\forall x \Phi)=0$ if and only if $x \in \OC(\Phi)$, as in~\cite{km}.

The scheme \texttt{ax12v} (as well as \texttt{ax12v2} and \texttt{ax-12}) is not supertrue, as can
be seen directly or because \texttt{spec}, which is not supertrue, is implied by it over the
supertrue set~\texttt{T}.

The scheme \texttt{subst} is still true if we remove the DV condition $\DV(x, y)$, and its degenerate
instance $\forall x (x \equiv x \to (\phi \to \forall x (x \equiv x \to \phi)), \DV(x, \phi)$
is equivalent over $\{\texttt{minimplcalc}, \texttt{gen}, \texttt{spec}, \texttt{EQrefl} \}$
to \texttt{vacGen}.

\subsection{The ``quantification over equal variables'' axiom scheme}

$\texttt{ALLeq}:
\forall x x \equiv y \to (\forall x \phi \to \forall y \phi)$
\vspace{0.5em}

This axiom (\href{https://us.metamath.org/mpeuni/ax-c11.html}{\texttt{set.mm/ax-c11}}) was
suggested to me by Norman Megill to add it to TMM, which otherwise is probably not complete.
In \texttt{set.mm}, it is implied by
\href{https://us.metamath.org/mpeuni/ax-12.html}{\texttt{set.mm/ax-12}}, which was recognized by
several to be ``too strong'' since it conveys the content of \texttt{ALLeq} in addition to the
substitution property conveyed by \texttt{subst}.

This axiom says that one can indifferently quantify over any of two ``always equal variables\rlap{,}''
and this phrase is in turn justified by the statement
\href{https://us.metamath.org/mpeuni/ax-c11n.html}{\texttt{set.mm/ax-c11n}}:
$\forall x x \equiv y \to \forall y y \equiv x$ showing the symmetry of that relation and
provable from $\texttt{modK} \cup \{\texttt{EQsymm}, \texttt{ALLeq}\}$ (while its reflexivity is
provable from $\{\texttt{ax-gen}, \texttt{EQrefl}\}$ and its transitivity is
provable from $\texttt{modK} \cup \{\texttt{EQtrans}, \texttt{ALLeq}\}$).

\subsection{The generalized equality axiom scheme}

$\texttt{genEq}:
\neg \forall x x \equiv y \to (\neg \forall x x \equiv z \to (y \equiv z \to \forall x y \equiv z))$
\vspace{0.5em}

Informally, \texttt{genEq} says that one can universally quantify an equality over a variable which
does not occur in that equality.
However, it is not a consequence of \texttt{vacGen}, because this non-occurrence is expressed by
the antecedents (on a domain with at least two elements, the formula $\forall v_0 v_0 \equiv v_1$
is false), and not by a DV~condition as in \texttt{vacGen}.

The axiom scheme \texttt{genEq} used to be an axiom scheme of \texttt{set.mm} but it got replaced by
\href{https://us.metamath.org/mpeuni/ax12v2.html}{\texttt{ax-13}} (see Appendix~\ref{app:labels}).
One has
$\{\texttt{spec}, \texttt{genEq}\} \vdash_\texttt{T} \texttt{ax-13} \vdash_\texttt{T} \texttt{genEq}$.
Furthermore, the independence proofs given above still work for that system.

The scheme \texttt{ax-13} is not semisupertrue (make the $\{x, y\}$-transform or the
$\{x, z\}$-transform), so in the \texttt{set.mm}-variant of \texttt{TMM}, our results prove the
independence of \texttt{ax-13} from \{\texttt{mp}, \texttt{gen}, \texttt{ax-1}, \dots, \texttt{ax-11}\}.

\subsection{The predicate axiom schemes}

$\texttt{ax-${P_i}_j$}:
x \equiv y \to (P_i(z_1, \ldots, x, \ldots, z_{a_i}) \to P_i(z_1, \ldots, y, \ldots, z_{a_i}))$
\vspace{0.5em}

Theses axiom schemes ensure the substitutivity property of the predicate $P_i$ with respect to each of
its $a_i$ variables.
One can add to each predicate axiom scheme the DV~condition $\DV(x, y)$ on the first two variables.
One can also combine the $a_i$ predicate axiom schemes associated with an $a_i$-ary predicate,
obtaining for example in the case of the binary infix operator~$\in$ the axiom
$x_0 \equiv x_1 \to (y_0 \equiv y_1 \to (x_0 \in y_0 \to x_1 \in y_1))$.

\section{Mario Carneiro's proof of the independence of the rule of generalization}
\label{app:gen}

Here is a proof of the independence (but not object-independence) of the rule of generalization due
to Mario Carneiro.
It is simpler than the proof in the main part and gives additional insight to the concept of supertruth.

Fix a model of first-order logic and let $a \in D$ be a fixed element of the domain of discourse.
Let $P$ be a unary predicate and interpret it as being true at~$a$ and false at at least one element
of~$D$\rlap{.}\footnote{Equivalently, one could add a term~$t$, to be assigned to~$a$, and define the
predicate $P$ to be ``$\bm{\cdot} \equiv t$\rlap{.}''}
Define a formula as being $*$-true if it is true in that model as soon as~$v_0$\footnote{
or: as soon as at least one variable} is assigned to~$a$\rlap{.}\footnote{Therefore, substitution of
(individual) variables does not preserve $*$-truth, but this is not a problem: only substitution of
metavariables has to preserve $*$-truth, which is the case here by construction.}
Since $*$-truth is a weaker notion than truth, all hypothesis-free axioms are $*$-true.
It is not hard to verify that modus ponens~\texttt{mp} preserves $*$-truth.
However, the rule of generalization~\texttt{gen} does not preserve $*$-truth.
For instance, the formula $P(v_0)$ is $*$-true (since $v_0$ has to be assigned to~$a$, where $P$ is
true), but the formula $\forall v_0 P(v_0)$ is not (here, no variable need be assigned, since this is
a closed formula).

Equivalently, one can rephrase the proof in terms of formula transformation: say that the
formula~$\Phi$ is $*$-true if the formula $P(v_0) \to \Phi$ is true in~$D$.

This proof illustrates the fact that, although it is standard to regard a formula as true when it is a
true statement for all assignments of its free variables, there are many other possible interpretations.

\section{Table of correspondence of scheme labels}
\label{app:labels}

We gather in Table~\ref{tab:labels} some schemes used in this article and in the Metamath database
\href{https://us.metamath.org/mpeuni/mmset.html}{\texttt{set.mm}} with their labels.

\begin{table}[!ht]
\begin{center}
\caption{Correspondence between some scheme labels}
\label{tab:labels}
\begin{tabular}{|c|c|c|}
\hline
scheme & this article & \href{https://us.metamath.org/mpeuni/mmset.html}{\texttt{set.mm}}\\
\hline
$\phi \andd \phi \to \psi \rimplies \psi$ & \texttt{mp} & \texttt{ax-mp}\\
$\phi \to ((\psi\to\chi) \to (((\theta\to\psi) \to (\chi\to\tau)) \to (\psi\to\tau)))$ & \texttt{minimp} &
\texttt{minimp}\\
$(\phi\to\psi) \to ((\psi\to\chi) \to (\phi\to\chi))$ & \texttt{syl}, $\texttt{B}'$ & \texttt{imim1}\\
$(\phi\to\psi) \to ((\chi\to\phi) \to (\chi\to\psi))$ & $\texttt{syl}^*$, \texttt{B} & \texttt{imim2}\\
$(\phi \to (\psi \to \chi)) \to (\psi \to (\phi \to \chi))$ & \texttt{comm}, \texttt{C} & \texttt{pm2.04}\\
$\phi \to (\psi \to \phi)$ & \texttt{simp}, \texttt{K} & \texttt{ax-1}\\
$\phi \to \phi$ & \texttt{id}, \texttt{I} & \texttt{id}\\
$(\phi \to (\phi \to \psi)) \to (\phi \to \psi))$ & \texttt{hilbert}, \texttt{W} & \texttt{pm2.43}\\
$(\phi \to (\psi \to \chi)) \to ((\phi \to \psi) \to (\phi \to \chi))$ & \texttt{frege}, \texttt{S} &
\texttt{ax-2}\\
$((\phi \to \psi) \to \phi) \to \phi$ & \texttt{peirce}, \texttt{P} & \texttt{peirce}\\
\hline
$(\phi \to \neg \psi) \to (\psi \to \neg \phi)$ & \texttt{contrap} & \texttt{con2}\\
$(\neg\phi \to \neg\psi) \to (\psi \to \phi)$ & & \texttt{ax-3}\\
$\neg\phi \to (\phi \to \psi)$ & \texttt{notelim} & \texttt{pm2.21}\\
$\phi \to (\neg\phi \to \psi)$ & \texttt{excontra} & \texttt{pm2.24}\\
$(\neg\phi \to \phi) \to \phi$ & \texttt{clavius} & \texttt{pm2.18}\\
$\phi \to \neg\neg\phi$ & \texttt{notnotintro} & \texttt{notnot}\\
$\neg\neg \phi \to \phi$ & \texttt{notnotelim} & \texttt{notnotr}\\
\hline
$\phi \rimplies \forall x \phi$ & \texttt{gen} & \texttt{ax-gen}\\
$\forall x (\phi \to \psi) \to (\forall x \phi \to \forall x \psi)$ & \texttt{ALLdistr}, \texttt{modalK} &
\texttt{ax-4}\\
$\forall x \phi \to \phi$ & \texttt{spec}, \texttt{modalT} & \texttt{sp}\\
$\neg \forall x \phi \to \forall x \neg \forall x \phi$ & \texttt{modal5} & \texttt{ax-10}\\
$\forall x \neg \phi \to \neg \forall x \phi$ & \texttt{modalD} & \texttt{bj-modald}\\
$\neg \phi \to \forall x \neg \forall x \phi$ & \texttt{modalB} & \texttt{bj-modalb}\\
$\forall x \phi \to \forall x \forall x \phi$ & \texttt{modal4} & \texttt{hba1}\\
\hline
$\forall x \forall y \phi \to \forall y \forall x \phi$ & \texttt{ALLcomm} & \texttt{ax-11}\\
\hline
$\phi \to \forall x \phi \comma \DV(x, \phi)$ & \texttt{vacGen} & \texttt{ax-5}\\
$\exists x \phi \to \forall x \phi \comma \DV(x, \phi)$ & & \texttt{ax5ea}\\
\hline
$x \equiv x$ & \texttt{EQrefl} & \texttt{equid}\\
$x \equiv y \to y \equiv x$ & \texttt{EQsymm} & \texttt{equcomi}\\
$x \equiv y \to (y \equiv z \to x \equiv z)$ & \texttt{EQtrans} & \texttt{equtr}\\
$x \equiv y \to (x \equiv z \to y \equiv z)$ & \texttt{EQeucl}, \texttt{ax-$\equiv_1$} & \texttt{ax-7}\\
$x \equiv y \to (z \equiv x \to z \equiv y)$ & \texttt{ax-$\equiv_2$} & \texttt{equeucl}\\
\hline
$\neg \forall x \neg x \equiv y$ & & \texttt{ax-6}\\
$\neg \forall x \neg x \equiv y \comma \DV(x, y)$ & $\texttt{denot}'$ & \texttt{ax6v}\\
$x \equiv x \to \neg \forall y \neg y \equiv x \comma \DV(x, y)$ & \texttt{denot}\phantom{'} &
\texttt{bj-denot}\\
\hline
$x \equiv y \to (\forall y \phi \to \forall x (x \equiv y \to \phi))$ & & \texttt{ax-12}\\
$\forall x (x \equiv y \to (\phi \to \forall x (x \equiv y \to \phi)))\comma \DV(x, y)\comma \DV(y, \phi)$
& \texttt{subst} & \texttt{bj-ax12v}\\
\hline
$\forall x x \equiv y \to \forall y y \equiv x$ &  & \texttt{ ax-c11n}\\
$\forall x x \equiv y \to (\forall x \phi \to \forall y \phi)$ & \texttt{ALLeq} & \texttt{ax-c11}\\
$\forall x x \equiv y \to (\phi \to \forall x \phi) \comma \DV(x, y)$ & \texttt{oneObj} & \texttt{ax-c16}\\
\hline
$\neg x \equiv y \to (y \equiv z \to \forall x y \equiv z)$ & & \texttt{ax-13}\\
$\neg \forall x x \equiv y \to (\neg \forall x x \equiv z \to (y \equiv z \to \forall x y \equiv z))$ &
\texttt{genEq}, $\texttt{gen}_\equiv$ & \texttt{ax-c9}\\
\footnotesize{$\neg \forall x x \equiv x_1 \to ( \dots \to (\neg \forall x x \equiv x_n \to (P(x_1, \dots, x_n) \to \forall x P(x_1, \dots, x_n))) \dots)$}
& \texttt{gen$_P$} & $\sim$\texttt{ ax-c14}\phantom{$\sim$}\\
\hline
$x \equiv y \to (x \in z \to y \in z)$ & \texttt{ax-$\in_1$} & \texttt{ax-8 }\\
$x \equiv y \to (z \in x \to z \in y)$ & \texttt{ax-$\in_2$} & \texttt{ax-9 }\\
$x \equiv y \to (P_i(z_1, \ldots, x, \ldots, z_{a_i}) \to P_i(z_1, \ldots, y, \ldots, z_{a_i}))$ &
\texttt{ax-${P_i}_j$} &\\
\hline
\end{tabular}
\end{center}
\end{table}

\printbibliography

@book{modal,
  author = {Patrick Blackburn and Maarten de Rijke and Yde Venema},
  title = {Modal logic},
  publisher = {Cambridge University Press},
  year = {2001},
}

@inproceedings{carneiro,
  author       = {Mario Carneiro},
  editor       = {Andrea Kohlhase and
                  Paul Libbrecht and
                  Bruce R. Miller and
                  Adam Naumowicz and
                  Walther Neuper and
                  Pedro Quaresma and
                  Frank Wm. Tompa and
                  Martin Suda},
  title        = {Models for Metamath},
  series       = {{CEUR} Workshop Proceedings},
  volume       = {1785},
  pages        = {187--203},
  publisher    = {CEUR-WS.org},
  year         = {2016},
  url          = {https://ceur-ws.org/Vol-1785/W1.pdf},
  timestamp    = {Fri, 10 Mar 2023 16:22:33 +0100},
  biburl       = {https://dblp.org/rec/conf/cikm/Carneiro16a.bib},
  bibsource    = {dblp computer science bibliography, https://dblp.org}
}

@article{jubin,
  author       = {Beno\^{i}t Jubin},
  title        = {On the independence of Robinson's set of axioms for propositional calculus},
  year         = 2021,
  eprint       = {2109.14745},
  eprinttype   = {arxiv},
}

@article{km,
  author = {Donald Kalish and Richard M. Montague},
  title = {On Tarski's formalization of predicate logic with identity},
  journal = {Archiv für Mathematische Logik und Grundlagenforschung},
  volume = {7},
  year = {1965},
  pages = {81--101},
}

@article{megill,
  author = {Norman D. Megill},
  title = {A Finitely Axiomatized Formalization of Predicate Calculus with Equality},
  journal = {Notre Dame Journal of Formal Logic},
  number = {25},
  pages = {435--453},
  year = {1995},
  url = {us.metamath.org/downloads/finiteaxiom.pdf},
}

@book{metamath,
  author = {Norman D. Megill and David A. Wheeler},
  title = {Metamath, A Computer Language for Mathematical Proofs},
  publisher = {Lulu Press},
  year = {2019},
  url = {us.metamath.org/downloads/metamath.pdf},
}

@article{monk,
  author = {J. Donald Monk},
  title = {Substitutionless predicate logic with identity},
  journal = {Archiv für Mathematische Logik und Grundlagenforschung},
  number = {7},
  year = {1965},
  pages = {102--121},
}

@article{monk2,
  author = {J. Donald Monk},
  title = {Provability with finitely many variables},
  journal = {Proceedings of the American Mathematical Society},
  volume = {27},
  number = {2},
  year = {1971},
  pages = {353--358},
}

@article{robinson,
  author = {T. Thacher Robinson},
  title = {Independence of two nice sets of axioms for the propositional calculus},
  journal = {The Journal of Symbolic Logic},
  volume = {33},
  number = {2},
  month = {June},
  year = {1968},
  pages = {265--270},
}

@article{tarski,
  author = {Alfred Tarski},
  title = {A simplified formalization of predicate logic with identity},
  journal = {Archiv für Mathematische Logik und Grundlagenforschung},
  number = {7},
  year = {1965},
  pages = {61--79},
}

\end{document}